\newtheorem{theorem}{Theorem}[section]
\newtheorem{lemma}[theorem]{Lemma}
\newtheorem{proposition}[theorem]{Proposition}
\newtheorem{corollary}[theorem]{Corollary}
\theoremstyle{definition}
\theoremstyle{remark}
\numberwithin{equation}{section}
\newcommand{\N}{\ensuremath{\mathcal{N}}}
\newcommand{\A}{\ensuremath{\mathcal{A}}}
\newcommand{\B}{\ensuremath{\mathcal{B}}}
\newcommand{\M}[1]{\mathcal{M}_{#1}}
\newcommand{\set}[1]{\left\{#1\right\}}
\newcommand{\ep}{\varepsilon}
\newcommand{\f}{\infty}
\newcommand{\ra}{\rightarrow}
\begin{document}

\title{On a problem of countable expansions}

\author{Yuru Zou}
\address{College of Mathematics and Computational Science, Shenzhen University, Shenzhen 518060, People's Republic of China}
\email{yrzou@163.com}

\author{Derong Kong}
\address{School of Mathematical Science, Yangzhou University, Yangzhou, JiangSu 225002, People's Republic of China.}
\email{derongkong@126.com}

\date{\today}
\dedicatory{}


\subjclass[2010]{Primary: 11A63, Secondary: 10K50, 11K55,  37B10}

\begin{abstract}
For a real number $q\in(1,2)$ and $x\in[0,1/(q-1)]$, the infinite sequence $(d_i)$ is called a \emph{$q$-expansion} of $x$ if
$$
x=\sum_{i=1}^\f\frac{d_i}{q^i},\quad d_i\in\{0,1\}\quad\textrm{for all}~ i\ge 1.
$$
For $m=1, 2, \cdots$ or $\aleph_0$ we denote by $\B_m$ the set of $q\in(1,2)$ such that there exists $x\in[0,1/(q-1)]$ having exactly $m$ different $q$-expansions. It was shown by Sidorov \cite{Sidorov_2009} that  $q_2:=\min \B_2\approx1.71064$, and later asked by Baker \cite{Baker_2015} whether $q_2\in\B_{\aleph_0}$? In this paper we provide a negative  answer to this question and conclude that $\B_{\aleph_0} $ is not a closed set. In particular, we give a complete description of $x\in[0,1/(q_2-1)]$ having exactly two different $q_2$-expansions.
\end{abstract}
\keywords{unique expansions; two expansions; countable expansions; branching points.}
\maketitle

\section{Introduction}\label{sec: Introduction}
Given $q\in(1,2)$ and a real $x\in I_q:=[0,1/(q-1)]$ we call the infinite sequence $(d_i)$  a $q$-expansion of $x$ if
$$
x=\sum_{i=1}^\f\frac{d_i}{q^i}, \quad d_i\in\{0,1\}, i\ge 1.
$$
Expansions in non-integer bases were pioneered by R\'{e}nyi \cite{Renyi_1957} and Parry \cite{Parry_1960}. It is well known that for each $q\in(1,2)$ almost every $x\in I_q$ has uncountably many $q$-expansions (see, e.g., \cite{ Dajani_DeVries_2007, Erdos_Joo_Komornik_1990, Sidorov_2003}). In particular, for $q\in(1,q_G)$ all except  two endpoints of $I_q$ have  a  continuum of $q$-expansions, where $q_G=(1+\sqrt{5})/2$. However, for $q>q_G$ there exists infinitely many numbers $x\in I_q$ having a unique $q$-expansion  (see \cite{Erdos_Horvath_Joo_1991}).   Furthermore, Glendinning and Sidorov \cite{Glendinning_Sidorov_2001} showed that there exists a constant $q_{KL}\approx 1.78723$, called the \emph{ Komornik-Loreti constant}, such that  the set $U_q$ of numbers $x$ having a unique $q$-expansion  has positive Hausdorff dimension if $q>q_{KL}$, while $U_q$ is  at most countable if $q<q_{KL}$. Recently, Kong and Li \cite{Kong_Li_2015} gave the Hausdorff dimension of $U_q$ for $q\in(1,2)$ (see also Komornik et al. \cite{Komornik_Kong_Li_2015_1}). For more information we refer to the papers \cite{Komornik_Loreti_2007, DeVries_Komornik_2008, Solomyak_1994} and surveys \cite{Sidorov_2003_1, Komornik_2011}.

Unlike the integer base expansions,  it was discovered by Erd\H{o}s et al. \cite{Erdos_Horvath_Joo_1991, Erdos_Joo_1992} that for $q\in(1,2)$  and  $m=1, 2, \cdots$ or $\aleph_0$ there exists $x\in I_q$ having exactly $m$ different $q$-expansions. We denote by $\B_m$  the set of all such $q$'s, i.e., $\B_m$ is the set of $q\in(1,2)$ such that there exists $x\in I_q$ having exactly $m$ different $q$-expansions.

The following results on $\B_m$ are due to Sidorov and Baker \cite{Baker_2015, Baker_Sidorov_2014, Erdos_Horvath_Joo_1991, Sidorov_2009}.
\begin{theorem}
  \label{t1}
  \begin{enumerate}
    \item The smallest element  of $\B_2$ is $q_2\approx 1.71064$, the appropriate  root of
$$
x^4 =2x^2 +x+1;
$$
    \item The smallest element of $\B_k, k\ge 3,$ is $q_f\approx 1.75488$, the appropriate root of
$$x^3 = 2 x^2 -x+ 1;$$
    \item The smallest element of $\B_{\aleph_0}$ is   $q_G=(1+\sqrt{5})/2$, and the second smallest element of $\B_{\aleph_0}$ is  $q_{\aleph_0}\approx 1.64541$, the appropriate root of
 $$x^6 =x^4 +x^3 +2x^2 +x+1.$$
  \end{enumerate}
\end{theorem}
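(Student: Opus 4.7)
My plan is to study $q$-expansions via the iterated function system on $I_q$ generated by $T_0(x)=qx$ and $T_1(x)=qx-1$: a sequence $(d_i)\in\{0,1\}^{\mathbb{N}}$ is a $q$-expansion of $x$ precisely when iterating the appropriate inverse branches keeps the orbit inside $I_q$, and a branching of this orbit occurs exactly when the current iterate lies in the switch region $S_q:=[1/q,\,1/(q(q-1))]$ where both $T_0^{-1}$ and $T_1^{-1}$ are admissible. Consequently the number of $q$-expansions of $x$ equals the number of infinite paths in the binary tree recording the orbit's visits to $S_q$. This reduces each part of the theorem to a combinatorial question about infinite $\{0,1\}$-sequences, governed by the quasi-greedy expansion $\alpha(q)=\alpha_1\alpha_2\cdots$ of $1$ and the Parry-type admissibility inequalities characterizing univoque (unique-expansion) sequences.

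For part (1), I would characterize $\mathcal{B}_2$ as the set of $q$ for which the expansion tree of some $x\in I_q$ has exactly one branching node whose two descendants each yield a unique $q$-expansion. To find the smallest such $q$, I would search for the simplest pair of univoque $\{0,1\}$-sequences that can share a common branching preimage; a short combinatorial search, guided by admissibility against $\alpha(q)$, should produce the extremal pair whose matching equation is $q^4=2q^2+q+1$. For part (2), the minimal $q_f$ should arise similarly from the simplest expansion tree admitting stacked branchings that produce exactly three (and more generally any prescribed $k\ge 3$) tails, leading to the cubic $q^3=2q^2-q+1$. For part (3), membership in $\mathcal{B}_{\aleph_0}$ requires an expansion tree with countably but not uncountably many branchings, which forces an eventually periodic branching pattern; the simplest Fibonacci-type self-similar pattern yields $q_G=(1+\sqrt{5})/2$, while the next simplest self-similar pattern, of period six, yields the sextic $q^6=q^4+q^3+2q^2+q+1$ defining $q_{\aleph_0}$.

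The principal obstacle in each part is proving minimality: constructing the extremal configurations is comparatively explicit, but ruling out the existence of \emph{any} $x$ with the prescribed number of expansions for every smaller $q$ requires careful use of the monotonicity of $\alpha(q)$ in $q$. For parts (1) and (2) one has to enumerate the possible short prefixes of the branching tails compatible with the admissibility inequalities and check that each such prefix forces $q$ above the claimed minimum; the bookkeeping is delicate because a single branching node must feed two simultaneously univoque orbits whose values coincide. For part (3) the analysis is more subtle: one must classify the possible eventually periodic branching patterns of small period and show that only the Fibonacci pattern and one period-six companion produce $q$-values in the relevant range, which amounts to a symbolic substitution calculation together with a gap estimate ensuring that no intermediate periodic pattern intrudes between $q_G$ and $q_{\aleph_0}$.
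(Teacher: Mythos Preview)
The paper does not prove Theorem~\ref{t1} at all: it is stated as a known result, attributed to Sidorov and Baker (with roots in Erd\H{o}s--Horv\'ath--Jo\'o), and is used as input for the paper's own contribution, Theorem~\ref{t2}. So there is no ``paper's proof'' to compare your proposal against; the actual arguments live in \cite{Sidorov_2009}, \cite{Baker_Sidorov_2014}, \cite{Baker_2015}, and \cite{Erdos_Horvath_Joo_1991}.

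On its own merits, your proposal is a reasonable \emph{outline} of the strategy those papers pursue, but it is not a proof. You correctly identify the branching-tree viewpoint and the role of the switch region, and for part~(1) the reduction to finding the minimal $q$ for which some $x\in S_q$ has $T_0(x),T_1(x)\in\M{1}$ is exactly what Sidorov does (compare the paper's Lemma~\ref{l32}). However, you yourself flag the minimality arguments as ``the principal obstacle'' and then do not carry them out; phrases like ``a short combinatorial search \ldots\ should produce the extremal pair'' and ``the next simplest self-similar pattern, of period six, yields the sextic'' are promissory, not arguments. The published proofs require genuine case analysis against the quasi-greedy expansion and are not one-liners.

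There is also a substantive gap in part~(3). Your assertion that $|\Sigma_q(x)|=\aleph_0$ ``forces an eventually periodic branching pattern'' is not justified and, as stated, is not the right characterization. What one actually needs (and what Baker proves, cf.\ Proposition~\ref{p22} here) is the existence of a \emph{$q$-null infinite point}: a point whose branching tree is infinite but at every branching node one of the two children has only finitely many expansions. This is a one-sidedness condition on the tree, not a periodicity condition, and the analysis of the second smallest element $q_{\aleph_0}$ hinges on classifying such points in an interval $J_q$, not on enumerating periodic patterns. Your sketch would need to be reworked around this notion before it could be completed.
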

It was asked by Baker \cite{Baker_2015}  whether $q_2\in\B_{\aleph_0}$?  In this paper we provide a negative answer to this question.
 \begin{theorem}\label{t2}
$q_2\notin\B_{\aleph_0}$.
 \end{theorem}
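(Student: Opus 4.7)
The plan is to combine the Cantor--Bendixson theorem with an explicit symbolic analysis at $q_2$. Set $T_0(x)=qx$, $T_1(x)=qx-1$, and write $\Sigma_q(x)\subset\{0,1\}^{\mathbb{N}}$ for the (closed) set of $q$-expansions of $x$; branching corresponds to iterates landing in the switch interval $S_q=[1/q,\,1/(q(q-1))]$. Since any closed subset of $\{0,1\}^{\mathbb{N}}$ is either countable or has cardinality $2^{\aleph_0}$, and since Theorem~\ref{t1}(2) gives $q_2<q_f=\min\B_k$ for each $k\ge 3$, the only possible cardinalities of $\Sigma_{q_2}(x)$ are $1$, $2$, $\aleph_0$ and $2^{\aleph_0}$. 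It therefore suffices to rule out $\aleph_0$.

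First I would use the defining relation $q_2^4=2q_2^2+q_2+1$ to describe the admissible symbolic dynamics at $q_2$ and in particular to obtain the complete list of $x$ with $|\Sigma_{q_2}(x)|=2$ promised in the abstract. Such an $x$ admits a single essential branch: at some iterate the orbit lands in $S_{q_2}$, one digit choice leads into the unique-expansion set $U_{q_2}$, and the other produces a trajectory that never returns to $S_{q_2}$. The key auxiliary facts are the explicit form of the quasi-greedy expansion of $1$ in base $q_2$ and the resulting finite list of admissible ``branch-to-$U_{q_2}$'' blocks.

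Assume now, for contradiction, that $|\Sigma_{q_2}(x_0)|=\aleph_0$ for some $x_0\in I_{q_2}$. By Cantor--Bendixson, $\Sigma_{q_2}(x_0)$ has finite Cantor--Bendixson rank and contains infinitely many isolated expansions; an expansion $(d_i)$ is isolated precisely when some partial orbit $T_{d_n}\circ\cdots\circ T_{d_1}(x_0)$ lies in $U_{q_2}$. Organising $\Sigma_{q_2}(x_0)$ as a binary tree whose nodes are partial orbits and whose branchings occur at landings in $S_{q_2}$, countability together with infinitely many isolated points forces a ``comb'': an infinite spine $(e_i)\in\Sigma_{q_2}(x_0)$ together with branching positions $i_1<i_2<\cdots$ at each of which the alternate digit choice immediately enters $U_{q_2}$.

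The main obstacle, and the heart of the argument, is to show that no such comb can exist at $q_2$. I would translate the comb condition into symbols: the step-$i_k$ alternate suffixes must lie in the finite list of blocks obtained in the first step, and the recurrence of such blocks along the spine would force the spine itself to contain blocks that, by the same transition restrictions coming from $q_2^4=2q_2^2+q_2+1$, are incompatible with the alternates actually lying in $U_{q_2}$. This finite case check on the Parry-type transition graph for $q_2$ contradicts the existence of the comb and completes the proof.
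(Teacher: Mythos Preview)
Your outline is headed in the same direction as the paper's proof---both reduce to showing that no ``comb'' (what the paper, following Baker, calls a \emph{$q_2$-null infinite point}) exists---but two of your steps are genuine gaps rather than routine details.

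\textbf{The Cantor--Bendixson reduction is incomplete.} Countability of $\Sigma_{q_2}(x_0)$ together with density of isolated points does \emph{not} by itself force a spine along which every alternate branch immediately enters $U_{q_2}$. A priori the tree could be a comb of combs, with alternate branches carrying their own countably infinite subtrees. What is actually needed is Baker's result (quoted in the paper as Proposition~\ref{p22}): for $q\in(q_G,q_f]$, having a point with $\aleph_0$ expansions is equivalent to the existence of a null infinite point in $\A_2(q)\cap J_q$. Proving this requires an argument of the type ``if every infinite-expansion point in the orbit has an $\A_3$ branching point, then one can make infinitely many independent binary choices and obtain $2^{\aleph_0}$ expansions''---not a direct appeal to Cantor--Bendixson rank. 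Moreover, the alternate branch need only land in $\M_1\cup\M_2$, not in $U_{q_2}=\M_1$; this matters because the list you must control is the infinite family in Theorem~\ref{t36}, not a finite set of blocks.

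\textbf{The ``finite case check'' is the whole proof, and you have not indicated how it goes.} Once one has the reduction, the paper still needs: (i) the complete, explicit description of $\M_2$ (Theorem~\ref{t36}), which is already a delicate symbolic argument; (ii) from it, the exact countable list $\A_2\cap J_{q_2}$ (Lemma~\ref{l41}); and (iii) for each of the infinitely many points in that list, an explicit word $d_1\cdots d_n$ such that $T_{d_1\cdots d_n}$ maps the point into $J_{q_2}\setminus\A_2$ (Lemmas~\ref{l44}--\ref{l45}). These computations use the relation $q_2^4=2q_2^2+q_2+1$ in a specific way, case by case over the parameters $m\in\{1,2,3,4\}$ and $k\ge 1$, with several small-$k$ exceptions handled numerically. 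Nothing in your sketch suggests the mechanism that makes this terminate; in particular you never invoke the restriction to the subinterval $J_{q_2}$, which is precisely what cuts the problem down to finitely many one-parameter families. Without that reduction (or an equivalent device), the spine could wander through all of $S_{q_2}$, and the transition-graph argument you allude to does not close.
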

Sidorov \cite{Sidorov_2009} showed that there exists a sequence $q^{(n)}\in\B_{\aleph_0},n\ge 1,$ strictly decreasing to $q_2$, and later Baker \cite{Baker_2015} proved that $\B_{\aleph_0}\cap(1,q_f]\setminus\{q_2\}$ is a discrete set.  By using  \cite[Theorem 4.5]{Baker_2015} and  Theorem \ref{t2} we have the following structure of $\B_{\aleph_0}$.
 \begin{corollary}\label{c3}
 $\B_{\aleph_0}\cap(1,q_f]$ is a discrete set containing countably infinitely  many elements. Furthermore, $\B_{\aleph_0}$ is not   closed.
 \end{corollary}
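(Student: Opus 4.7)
The plan is to combine three ingredients already at hand: Baker's Theorem~4.5 in \cite{Baker_2015}, Sidorov's approximating sequence from \cite{Sidorov_2009}, and Theorem \ref{t2} above.

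For the discreteness assertion, I will use Theorem \ref{t2} to note that $q_2\notin\B_{\aleph_0}$, so that
$$\B_{\aleph_0}\cap(1,q_f] \;=\; \bigl(\B_{\aleph_0}\cap(1,q_f]\bigr)\setminus\{q_2\}.$$
Baker's Theorem~4.5 asserts that the right-hand side is discrete, so discreteness of the left-hand side follows at once. It is worth pointing out that before Theorem \ref{t2} is known, one could not conclude discreteness, since $q_2$ is a genuine accumulation point of $\B_{\aleph_0}$ in $(1,q_f]$ by Sidorov's sequence; what rescues us is precisely that this accumulation point lies outside the set.

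For the countable-infinitude statement, I will first observe that every discrete subset of $\R$ is automatically at most countable, so only infinitude needs to be ruled out. Here I invoke Sidorov's sequence $q^{(n)}\in\B_{\aleph_0}$ strictly decreasing to $q_2$: because $q_2<q_f$, the tail $(q^{(n)})_{n\ge N}$ lies in $(1,q_f]$ for $N$ sufficiently large, producing infinitely many distinct elements of $\B_{\aleph_0}\cap(1,q_f]$.

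Finally, non-closedness of $\B_{\aleph_0}$ is immediate from the same sequence: $q^{(n)}\in\B_{\aleph_0}$ with $q^{(n)}\to q_2$, while $q_2\notin\B_{\aleph_0}$ by Theorem \ref{t2}, exhibiting a limit point missing from the set. I do not anticipate any serious obstacle in the corollary itself; all the substantive work has been absorbed into Theorem \ref{t2} and Baker's discreteness result, and the corollary amounts to routine bookkeeping that knits these pieces together.
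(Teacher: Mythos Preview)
Your proposal is correct and follows essentially the same approach as the paper, which derives the corollary directly from Theorem~\ref{t2}, Baker's Theorem~4.5, and Sidorov's decreasing sequence $q^{(n)}\searrow q_2$ in $\B_{\aleph_0}$. One trivial slip: you write ``only infinitude needs to be ruled out'' where you mean ``only infinitude needs to be established'' (equivalently, finitude ruled out); the argument you then give is the right one.
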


  It was shown in \cite[Theorem 4.1]{Baker_2015} that if $x\in I_{q_2}$ has uncountable $q_2$-expansions, then $x$ has  a  continuum of $q_2$-expansions. By using Theorems \ref{t1} and \ref{t2} we have the following corollary.
 \begin{corollary}\label{c4}
  Let $x\in I_{q_2}$. Then  $x$ has a unique $q_2$-expansion,  two $q_2$-expansions, or  a  continuum of $q_2$-expansions.
    \end{corollary}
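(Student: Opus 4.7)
The plan is to combine Theorem \ref{t1}, Theorem \ref{t2}, and \cite[Theorem 4.1]{Baker_2015} in a direct case analysis on the cardinality of the set of $q_2$-expansions of a given $x\in I_{q_2}$. Since every $x\in I_q$ admits at least one $q$-expansion, and the set of $q_2$-expansions is a subset of $\{0,1\}^{\mathbb{N}}$, its cardinality is either a positive integer, $\aleph_0$, or uncountable (in which case it is at most $2^{\aleph_0}$).

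First I would rule out the cases $m\in\{3,4,5,\dots\}$. If some $x\in I_{q_2}$ had exactly $m\ge3$ different $q_2$-expansions, then by definition $q_2\in\B_m$. However, Theorem \ref{t1}(2) says that $\min \B_m=q_f\approx 1.75488$ for all $m\ge3$, while $q_2\approx 1.71064<q_f$, which is a contradiction. Hence the only admissible finite cardinalities are $1$ and $2$.

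Next I would rule out the case of exactly countably infinitely many expansions. If $x$ had $\aleph_0$ different $q_2$-expansions, then $q_2\in\B_{\aleph_0}$, directly contradicting Theorem \ref{t2}. Finally, in the remaining case that the set of expansions of $x$ is uncountable, I would simply invoke \cite[Theorem 4.1]{Baker_2015}, which promotes uncountability to a continuum. Combining these three observations exhausts the possible cardinalities and yields the statement.

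There is no genuine obstacle here; the corollary is a clean synthesis of the three cited results. The only delicate point to check is that the case list is exhaustive, i.e.\ that no $x\in I_{q_2}$ is left with $0$ expansions, which is a classical property of $q$-expansions for $q\in(1,2)$ on $I_q$, and that the notions ``uncountable'' and ``continuum'' are properly linked via \cite[Theorem 4.1]{Baker_2015} rather than left as a gap.
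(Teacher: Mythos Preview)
Your proposal is correct and follows exactly the route indicated in the paper: the paper derives Corollary~\ref{c4} directly from Theorem~\ref{t1}(2) (to exclude $3\le m<\infty$), Theorem~\ref{t2} (to exclude $\aleph_0$), and \cite[Theorem~4.1]{Baker_2015} (to upgrade ``uncountable'' to ``continuum''). Your additional remarks on exhaustiveness are sound and only make the argument more explicit.
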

Denote by $\M{k}$ the set of $x\in I_{q_2}$ having exactly $k$ different $q_2$-expansions. Then   Corollary \ref{c4} says that
$\M{k}=\emptyset$ for $k\ge 3$, and any $x\in I_{q_2}\setminus(\M{1}\cup\M{2})$ has  a  continuum of $q_2$-expansions. The set $\M{1}$ was investigated by Glendinning and Sidorov \cite{Glendinning_Sidorov_2001}. In Theorem \ref{t36} we will give a complete description of $\M{2}$. Interestingly, we find  that $\M{1}$ is the set of all accumulation points of $\M{2}$ (see Corollary \ref{c37}).

 The structure of this paper is arranged as follows. In Section \ref{sec:branching points} we classify the branching points and recall some results on countable expansions. In Section \ref{sec:two expansions} we give a complete description of points in $I_{q_2}$ having exactly two different $q_2$-expansions. The proof of Theorem \ref{t2} will be given in Section \ref{sec:proof of theorem}.

\section{ branching points}\label{sec:branching points}
For $q\in(1,2)$ and   $x\in I_q=[0,1/(q-1)]$ we denote by $\Sigma_q(x)$ the set of all $q$-expansions of $x$, i.e.,
$$
\Sigma_q(x):=\left\{(d_i)\in\{0,1\}^\f: \sum_{i=1}^\f \frac{d_i}{q^i}=x\right\},
$$
where $\{0,1\}^\f$ means the set of  sequences $(c_i)$ with $c_i\in\{0,1\}$ for all $i\ge 1$. We will always write $\Sigma(x)$ instead of $\Sigma_q(x)$ if   no confusion arises for $q$.

For $n\ge 1$ let $\{0,1\}^n$ be the set of words $c_1\cdots c_n$  with $c_i\in\{0,1\}$, and we write for $\{0,1\}^*$ the set of all finite words  $c_1\cdots c_n\in\{0,1\}^n$ for all $n\ge 1$. For two finite words $c_1\cdots c_m, d_1\cdots d_n\in\{0,1\}^*$ we denote by $c_1\cdots c_m d_1\cdots d_n$ their concatenation. In particular, we write for $(c_1\cdots c_m)^k$ and $(c_1\cdots c_m)^\f$ the concatenations of the word $c_1\cdots c_m$ to itself $k$ times and infinitely many times, respectively. Furthermore, we write for $\overline{c_1\cdots c_n}:=(1-c_1)\cdots (1-c_n)$ the \emph{reflection} of the word $c_1\cdots c_n$, and denote by
$(\overline{c_i}):=(1-c_i)$ the reflection of the sequence $(c_i)$.

For $q\in(1,2)$  we consider the following expanding maps
\begin{equation*}
\left\{
\begin{array}{ll}
T_{q,0}(x):=q x &\textrm{if}\quad 0\le x\le1/(q^2-q),\\
 T_{q,1}(x):=qx-1 &\textrm{if}\quad 1/q\le x\le 1/(q-1).
\end{array}
\right.
\end{equation*}
Note that $1/q<1/(q^2-q)$, and the  interval
 $$S_q:=\left[\frac{1}{q}, \frac{1}{q^2-q}\right]$$
   is called the \emph{switch region} of $\{T_{q,0}, T_{q,1}\}$ (see \cite{Dajani_Kraaikamp_2003}). This is because for $x\in S_q$ we have a choice  between $T_{q,0}$ and $T_{q,1}$. For a point $x\in I_q$, if $|\Sigma(x)|>1$, then there exists a word $d_1\cdots d_n\in\{0,1\}^*$ such that
$$
T_{q, d_1\cdots d_n}(x):=
T_{q,d_1}\circ\cdots\circ T_{q,d_n}(x)\in S_q.
$$
 Here $|A|$ denotes the cardinality of a set $A$. In particular, for $n=0$ we  set $T_{q, d_1\cdots d_n}$ as the identity map.

 For $q\in(1,2)$  we classify the points in $S_q$ in the following way:
\begin{itemize}
  \item Let $ \A_1(q)$ be the set of   points $x\in S_q$ satisfying
   $$|\Sigma(T_{q, 0}(x))|<\f\quad\textrm{and}\quad |\Sigma(T_{q, 1}(x))|<\f;$$
  \item Let $ \A_2(q)$ be the set of   points $x\in S_q$ satisfying
  $$|\Sigma(T_{q, s}(x))|<\f\quad\textrm{ and}\quad |\Sigma(T_{q, 1-s}(x))|=\f$$
  for some $s\in\{0,1\}$;
  \item Let $ \A_3(q)$ be the set of   points $x\in S_q$ satisfying
    $$|\Sigma(T_{q, 0}(x))|=\f\quad\textrm{and}\quad|\Sigma(T_{q, 1}(x))|=\f.$$
\end{itemize}
Then $S_q=\bigcup_{i=1}^3 \A_i(q)$.

%
%
%

\begin{def}
  \label{def:1}
  Let $x\in I_{q }$ with $|\Sigma(x)|=\f$. The point $T_{q,d_1\cdots d_n}(x)$ is called a \emph{branching point} of $x$ if
  $T_{q, d_1\cdots d_n}(x)\in \A_2(q)\cup\A_3(q).$
\end{def}

Recall from \cite{Baker_2015} that a point $x\in I_{q}$ with $|\Sigma(x)|=\f$ is called a {\emph{$q$-null infinite point}} if all of its branching points    belong to $\A_2(q)$. Clearly, if $x$ is a $q$-null infinite point, then so are its branching points.

For $q\in(1,2)$ let
$$
J_q:=\left[\frac{q+q^2}{q^4-1},\frac{1+q^3}{q^4-1}\right].
$$
The following lemma is shown by Baker \cite[Lemmas 2.7 and 3.1]{Baker_2015}.
\begin{proposition}\label{p22}
$q\in \B_{\aleph_0}\cap(q_G,q_f]$ if and only if $\A_2(q)\cap J_{q}$ contains a $q$-null infinite point.
\end{proposition}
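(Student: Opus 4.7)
The plan is to prove the two implications separately, exploiting the trichotomy $S_q=\A_1(q)\cup\A_2(q)\cup\A_3(q)$ together with a careful analysis of the dynamics of the switch-region maps $T_{q,0}$ and $T_{q,1}$ in the specific regime $q\in(q_G,q_f]$.

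For the forward direction, suppose $q\in\B_{\aleph_0}\cap(q_G,q_f]$, and choose $x\in I_q$ with $|\Sigma(x)|=\aleph_0$. The first step is to show that $x$ must itself be a $q$-null infinite point. Indeed, if some branching point $y=T_{q,d_1\cdots d_n}(x)$ lay in $\A_3(q)$, then both $T_{q,0}(y)$ and $T_{q,1}(y)$ would admit infinitely many $q$-expansions, and iterating this observation at each descendant in $\A_3(q)$ would build a full binary subtree of independent branchings. Each leaf of this subtree contributes an independent choice of expansion for $x$, yielding a continuum of $q$-expansions of $x$ and contradicting $|\Sigma(x)|=\aleph_0$. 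Hence every branching point of $x$ lies in $\A_2(q)$, so $x$ is $q$-null infinite. The second, more delicate step is to show that some branching point of $x$ actually falls inside $J_q$. Here the upper bound $q\le q_f$ is essential: one inspects the images $T_{q,0}(S_q)$ and $T_{q,1}(S_q)$, partitions $S_q$ according to which $\A_i(q)$ each subinterval belongs to, and verifies that no orbit of the switch-region dynamics that remains in $\A_2(q)$ can avoid $J_q$ forever. The resulting branching point in $\A_2(q)\cap J_q$ automatically inherits the $q$-null infinite property from $x$.

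For the converse, assume $y\in\A_2(q)\cap J_q$ is a $q$-null infinite point. Since $y\in\A_2(q)$, one of $T_{q,0}(y),T_{q,1}(y)$ has infinitely many expansions, so $|\Sigma(y)|\ge\aleph_0$. To show equality, I classify each $(d_i)\in\Sigma(y)$ by the first branching level at which the \emph{finite} branch is chosen. At every branching point (which lies in $\A_2(q)$) exactly one of the two successors has $|\Sigma|<\f$ and the other has $|\Sigma|=\f$; thus, apart from the unique ``infinite spine'' that always selects the infinite branch, every other expansion of $y$ leaves the spine at some countable level and thereafter admits only finitely many completions. Summing finitely many expansions over countably many levels yields $|\Sigma(y)|\le\aleph_0$, so $|\Sigma(y)|=\aleph_0$ and $q\in\B_{\aleph_0}$.

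The main obstacle is the recurrence argument in the forward direction, namely forcing the orbit of an $\A_2(q)$ point back into $J_q$. This reduces to explicit interval estimates for $T_{q,i}(S_q)\cap S_q$ and a classification of which subintervals of $S_q$ belong to each $\A_i(q)$ for $q\in(q_G,q_f]$; the hypothesis $q\le q_f$ is precisely what prevents $\A_3(q)\cap J_q$ from being too large. The singled-out interval $J_q$ is natural because its endpoints $(q+q^2)/(q^4-1)$ and $(1+q^3)/(q^4-1)$ are the fixed points of the period-$4$ compositions $T_{q,0}\circ T_{q,1}\circ T_{q,1}\circ T_{q,0}$ and $T_{q,1}\circ T_{q,0}\circ T_{q,0}\circ T_{q,1}$ respectively; thus $J_q$ is the maximal subinterval of $S_q$ supporting an orbit that revisits the switch region under a period-$2$ switching pattern, which is exactly the combinatorial structure needed to sustain a $q$-null infinite branching tree.
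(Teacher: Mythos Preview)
The paper does not prove this proposition itself; it is quoted from Baker \cite[Lemmas 2.7 and 3.1]{Baker_2015}, so there is no in-paper argument to compare against and I assess your attempt on its own terms.

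Your converse direction is essentially correct: the unique ``infinite spine'' together with the finitely many completions branching off at each $\A_2$-level yields exactly $\aleph_0$ expansions (the hypothesis $y\in J_q$ plays no role here, and indeed any $q$-null infinite point witnesses $q\in\B_{\aleph_0}$).

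Your forward direction has a genuine gap. In step~1 you claim that if $|\Sigma(x)|=\aleph_0$ then $x$ itself must be $q$-null infinite, arguing that a branching point $y\in\A_3(q)$ would generate a full binary subtree and hence a continuum of expansions. This inference fails: $y\in\A_3(q)$ only says that both $T_{q,0}(y)$ and $T_{q,1}(y)$ have infinitely many expansions, not that either of them again lies in $\A_3(q)$. If, for instance, both children of $y$ are themselves $q$-null infinite, then each contributes $\aleph_0$ expansions and $|\Sigma(y)|=\aleph_0+\aleph_0=\aleph_0$; no continuum arises, yet $y$ is \emph{not} $q$-null infinite (its own branching point $y$, with $n=0$, lies in $\A_3$). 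Thus a single $\A_3$ point does not by itself force uncountably many expansions, and your argument does not establish even the weaker (and correct) conclusion that \emph{some} descendant of $x$ is $q$-null infinite. Step~2 is also only a sketch: you correctly flag that the bound $q\le q_f$ is essential and identify the periodic endpoints of $J_q$, but the claim that the $\A_2$-orbit of a $q$-null infinite point must enter $J_q$ is precisely the content of Baker's Lemma~3.1 and requires an explicit case analysis of the switch-region dynamics that you have not carried out.
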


\section{two $q_2$-expansions}\label{sec:two expansions}

In the remainder part of the paper we will fix $q=q_2\approx 1.71064$. By Theorem \ref{t1} it follows that   points in $ I_{q_2}$ can only  have a unique $q_2$-expansion, two $q_2$-expansions, countably infinitely   many $q_2$-expansions, or  a  continuum of  $q_2$-expansions. In this paper we will show that the third case can not occur, i.e., points in $I_{q_2}$ can not have countably infinitely    many $q_2$-expansions.

Recall in Section \ref{sec: Introduction} that $ \M{k}$ is the set of $x\in I_{q_2}$ having exactly $k$ different $q_2$-expansions. We denote by $ \M{k}'$ the set of corresponding $q_2$-expansions $(d_i)$ satisfying
$$((d_i))_{q_2}:=\sum_{i=1}^\f\frac{d_i}{q_2^i}\in\M{k}.$$
We point out that a number $x\in\M{k}$ corresponds to $k$ different $q_2$-expansions in $\M{k}'$.

Note by Theorem \ref{t1} that for $q=q_2$ if $|\Sigma(x)|<\f$, then $|\Sigma(x)|=1$ or $2$, i.e., $x\in\M{1}\cup\M{2}$.
 The following lemma for $\M{1}$ was shown by Glendinning and Sidorov \cite[Theorem 2]{Glendinning_Sidorov_2001}.
\begin{lemma}
  \label{l31}
      $$\M{1}=\set{(0^\f)_{q_2}, ({1^\f})_{q_2}}\cup\bigcup_{k=0}^\f\set{ (0^k(10)^\f)_{q_2}, (1^k(01)^\f)_{q_2} }.$$
\end{lemma}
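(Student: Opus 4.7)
The plan is to exploit the dynamical characterization of unique $q_2$-expansions: $(d_i)$ is the unique $q_2$-expansion of $x := ((d_i))_{q_2}$ if and only if the forward orbit $\{T_{q_2,d_1\cdots d_n}(x) : n\geq 0\}$ lies entirely in $A \cup B$, where $A := [0, 1/q_2)$ and $B := (1/(q_2^2-q_2), 1/(q_2-1)]$ are the complement of the closed switch region $S_{q_2}$. Any orbit point in $S_{q_2}$ would admit both $T_{q_2,0}$ and $T_{q_2,1}$ with distinct images and so produce branching. The symmetry $(d_i) \leftrightarrow (\overline{d_i})$ (equivalently $x \leftrightarrow 1/(q_2-1)-x$) exchanges $A \leftrightarrow B$ and the two listed families, so it suffices to treat one side.

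For the inclusion $\supseteq$, I would verify each family directly. The fixed points $0 \in A$ and $1/(q_2-1) \in B$ of $T_{q_2,0}$ and $T_{q_2,1}$ produce $0^\f$ and $1^\f$. For $(10)^\f$ the orbit is the $2$-cycle $\{q_2/(q_2^2-1),\, 1/(q_2^2-1)\}$, and the memberships $q_2/(q_2^2-1) \in B$ and $1/(q_2^2-1) \in A$ both reduce to $q_2^2 > q_2+1$, which holds since $q_2 > q_G$. For $0^k(10)^\f$ with $k \geq 1$, the orbit is the increasing sequence $1/(q_2^{k-1}(q_2^2-1)), \ldots, 1/(q_2^2-1)$ feeding into the $2$-cycle, and all pre-cycle values are bounded above by $1/(q_2^2-1) < 1/q_2$ and hence lie in $A$.

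For the inclusion $\subseteq$ I would classify all orbits confined to $A \cup B$. Partitioning $A$ by the action of $T_{q_2,0}$, the sub-region $[0, 1/q_2^2)$ maps back into $A$, the sub-region $(1/(q_2^3-q_2^2), 1/q_2)$ maps into $B$, and the middle band $[1/q_2^2, 1/(q_2^3-q_2^2)]$ maps into $S_{q_2}$ and is therefore forbidden; the band is non-empty precisely because $q_2 > q_G$. Symmetrically for $B$. Fixed-point analysis on each region gives only $0$ in $A$ and $1/(q_2-1)$ in $B$. The second-iterate map $T_{q_2,0}\circ T_{q_2,1}(y) = q_2^2 y - q_2$, restricted to the relevant $B$-sub-region, is expanding with single fixed point $q_2/(q_2^2-1)$; any other starting point escapes into the forbidden band. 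An analogous argument on $A$ isolates $1/(q_2^2-1)$. Hence all periodic orbits in $A \cup B$ correspond exactly to the listed families.

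The main obstacle is the pre-periodic orbits. Set $a_k := (q_2^{k+1}+q_2^k-1)/(q_2^k(q_2^2-1))$ and $b_k := 1/(q_2^k(q_2^2-1))$; these satisfy $a_{k+1}=(a_k+1)/q_2$ and $b_{k+1}=b_k/q_2$, and form the natural preimage chains of the $2$-cycle realizing the expansions $1^k(01)^\f$ and $0^k(10)^\f$. To rule out other preimage chains I would show by induction on $k \geq 1$ that the \emph{alternative} predecessor of $a_k$ (namely $a_k/q_2$, corresponding to digit $0$) and of $b_k$ (namely $(b_k+1)/q_2$, corresponding to digit $1$) both lie in $S_{q_2}$. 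The crux---the base case $k=1$---reduces, via a short computation using $q_2^4 = 2q_2^2+q_2+1$, to the clean identity $q_2^3(q_2+1)(2-q_2) = q_2^2+1$, which immediately yields $q_2(q_2+1)(2-q_2) = 1 + 1/q_2^2 > 1$, exactly the inequality required at $k=1$. Monotonicity of $a_k$ and $b_k$ in $k$ propagates the inequality to all $k \geq 1$, forcing each preimage chain to be single-branched. Finally, the fixed points $0$ and $1/(q_2-1)$ have only themselves as valid preimages because their alternatives $1/q_2$ and $1/(q_2^2-q_2)$ lie on the boundary of $S_{q_2}$ and are excluded. This exhausts all orbits confined to $A \cup B$ and completes the proof.
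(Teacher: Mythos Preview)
The paper does not prove this lemma at all; it simply quotes it as Theorem~2 of Glendinning and Sidorov. So there is no argument in the paper to compare against, and your proposal is a self-contained proof where the paper relies on a citation. Your $\supseteq$ direction is fine, and the preimage-chain part of your $\subseteq$ direction is correct: the identity $q_2^3(q_2+1)(2-q_2)=q_2^2+1$ does hold and gives precisely the inequality $q_2(q_2+1)(2-q_2)>1$ needed to place the alternative predecessors of $a_k,b_k$ inside $S_{q_2}$.

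There is, however, a genuine gap in your classification of periodic orbits. The ``escapes'' argument treats only the return map $T_0\circ T_1$ on the lower sub-region of $B$, hence only itineraries of the form $(BA)^\infty$. As written it does not exclude periodic itineraries such as $(BAA)^\infty$ or $(BBA)^\infty$, nor does it show that every orbit confined to $A\cup B$ is eventually periodic; your preimage analysis then only traces backwards from the orbits you have already found, so other orbits are never ruled out. The missing ingredient is the single symbolic step: if $(d_i)\in\M{1}'$ and $d_nd_{n+1}=01$, then $d_{n+2}=0$. Indeed $d_n=0$ forces $t_n<1/q_2$, hence $t_{n+1}=q_2t_n<1$ and $t_{n+2}=q_2t_{n+1}-1<q_2-1$; but $d_{n+2}=1$ would require $t_{n+2}>1/(q_2(q_2-1))$, impossible since $q_2(q_2-1)^2<1$ (equivalently $q_2<q_f$). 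Together with its reflection this shows that any occurrence of $01$ or $10$ forces the tail to be $(01)^\infty$ or $(10)^\infty$, which immediately yields the stated list and subsumes both your periodic-orbit and preimage-chain steps.
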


Now we turn to the investigation of $\M{2}$. This will be done by a sequel of lemmas. The complete description of $\M{2}$ will be given in Theorem \ref{t36}. Interestingly, we prove in Corollary \ref{c37} that  the set of all accumulation points of $\M{2}$ is exactly $\M{1}$. Therefore, we conclude that $\M{2}$ is a discrete set containing countably infinitely    many elements. Furthermore, $\M{2}$ is not closed.

Recall that $\A_1=\A_1(q_2)$ is the set of      $x\in S_{q_2}$ such that both $|\Sigma(T_{ 0}(x))|$ and $|\Sigma(T_{ 1}(x))|$ are finite. Here and in the sequel we will   write  $T_s$ instead of $T_{q_2, s}$ for $s\in\set{0,1}$. By Theorem \ref{t1} it follows that $\A_1$ is the set of $x\in S_{q_2}$ such that
$$|\Sigma(T_{ 0}(x))|=|\Sigma(T_{ 1}(x))|=1.$$
This implies that $\A_1\subseteq\M{2}$.

The following lemma for $\A_1 $ was shown in \cite[Proposition 3.1]{Baker_Sidorov_2014}(see also, \cite[Proposition 2.4]{Sidorov_2009}). For self-containedness we give an alternative proof.
\begin{lemma}
  \label{l32}
  $$\A_1 =\set{ (01(10)^\f)_{q_2},  (10(01)^\f)_{q_2}}.$$
\end{lemma}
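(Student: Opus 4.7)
The plan is to prove both inclusions, using Lemma \ref{l31} and the defining polynomial $q_2^4 = 2q_2^2 + q_2 + 1$.

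For the inclusion ``$\supseteq$'', I will verify each candidate directly. Reading off digits, $T_0((01(10)^\f)_{q_2}) = (1(10)^\f)_{q_2} = (1^2(01)^\f)_{q_2}$, which belongs to $\M{1}$ by Lemma \ref{l31}. A short computation using $q_2^4 = 2q_2^2 + q_2 + 1$ then identifies $T_1((01(10)^\f)_{q_2}) = (0^4(10)^\f)_{q_2}$, which is also in $\M{1}$. Hence $(01(10)^\f)_{q_2} \in \A_1$. The symmetric point $(10(01)^\f)_{q_2}$ is then handled by the reflection symmetry $x \mapsto 1/(q_2-1) - x$, which preserves $\A_1$ while interchanging $T_0$ and $T_1$.

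For the inclusion ``$\subseteq$'', let $x \in \A_1 \subseteq S_{q_2} = [1/q_2, 1/(q_2^2-q_2)]$. Both $T_0(x)$ and $T_1(x)$ lie in $\M{1}$, and the constraint $x \in S_{q_2}$ forces $T_0(x) \in [1, 1/(q_2-1)]$ and $T_1(x) \in [0, (2-q_2)/(q_2-1)]$. Comparing with the explicit description of $\M{1}$ from Lemma \ref{l31}, these intervals contain only
\[ T_0(x) \in \{1/(q_2-1)\} \cup \bigl\{(1^k(01)^\f)_{q_2} : k \geq 2\bigr\}, \quad T_1(x) \in \{0\} \cup \bigl\{(0^j(10)^\f)_{q_2} : j \geq 2\bigr\}. \]
The boundary subcases $T_0(x) = 1/(q_2-1)$ and $T_1(x) = 0$ are eliminated by a direct check that the forced value of the other coordinate, namely $(2-q_2)/(q_2-1)$ and $1$ respectively, does not lie in $\M{1}$.

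In the remaining case $T_0(x) = (1^k(01)^\f)_{q_2}$ and $T_1(x) = (0^j(10)^\f)_{q_2}$ with $k, j \geq 2$, expanding these closed forms and invoking $q_2^4 = 2q_2^2 + q_2 + 1$ reduces the equation $T_0(x) - T_1(x) = 1$ to
\[ q_2^{-k} + q_2^{-j} = q_2^{-2} + q_2^{-4}. \]
Since $q_2 > 1$, the function $n \mapsto q_2^{-n}$ is strictly decreasing, and the elementary inequality $(q_2-1)^2 > 0$ gives $q_2^{-2} + q_2^{-4} > 2 q_2^{-3}$; a brief case analysis over integers $k, j \geq 2$ then forces $\{k, j\} = \{2, 4\}$. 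The two corresponding assignments yield the two points in the statement. The main obstacle is the algebraic reduction to the clean form $q_2^{-k} + q_2^{-j} = q_2^{-2} + q_2^{-4}$; once this is achieved, the integer analysis is straightforward.
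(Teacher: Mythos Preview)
Your proof is correct and follows essentially the same route as the paper's: both use Lemma~\ref{l31} to pin down the possible forms of $T_0(x)$ and $T_1(x)$, reduce the condition $T_0(x)-T_1(x)=1$ to an algebraic relation in two integer parameters, and then solve. The only notable difference is in the endgame: the paper treats $q$ as a variable and argues via monotonicity of the root $q_{j,k}$ of $q^{-j}+q^{-k}+q^2-q-2=0$ (with indices shifted by one relative to yours), whereas you substitute the defining relation $q_2^4=2q_2^2+q_2+1$ to obtain the transparent form $q_2^{-k}+q_2^{-j}=q_2^{-2}+q_2^{-4}$ and conclude by an elementary case check.
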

\begin{proof}
Take $x\in\A_1 $. Then $T_0(x), T_1(x)\in\M{1}$. By Lemma \ref{l31} it follows that $x$ must  be of the form
$$
x= (10^j(10)^\f)_{q_2}= (01^k(01)^\f)_{q_2} \quad\textrm{for some}~ j, k\ge 0.
$$
Note that $q_2>q_G$. One can easily check for $j, k=0, 1$ that
 $$ (10^j (10)^\f)_{q_2}>\frac{1}{q_2(q_2-1)}\quad\textrm{and}\quad (01^k (01)^\f)_{q_2}<1/q_2.$$
   Then
$$
x= (10^j(01)^\f)_{q_2}= (01^k(10)^\f)_{q_2} \quad\textrm{for some}~j, k\ge 1.
$$
Equivalently, $q_2$ should be a positive root of the equation
$$
\frac{1}{q}+\frac{1}{q^{j+1}(q^2-1)}=\frac{1}{q^2}+\cdots+\frac{1}{q^{k+1}}+\frac{1}{q^{k}(q^2-1)}
$$
for some $j, k\ge 1$.
Simplifying the above equation it suffices to show that $q_2$ is a positive root of
\begin{equation}\label{e31}
q^{-j}+q^{-k}+q^2-q-2=0\quad\textrm{for some}~k,j\ge 1.
\end{equation}
One can easily check that $q_2$ satisfies the above equation  for $(j,k)=(1,3)$ or $(j,k)=(3,1)$, and in this case
$$
x= (10(01)^\f)_{q_2} = (01^3(10)^\f)_{q_2} \in\A_1,
$$
or
$$
 x=(10^3(01)^\f)_{q_2}=  (01(10)^\f)_{q_2}\in\A_1.$$
We will finish the proof by showing that $(j,k)=(1,3)$ and $(3,1)$ are the only two cases such that (\ref{e31}) holds for $q=q_2$.

Let
$$f(q)=q^{-j}+q^{-k}+q^2-q-2.$$
Then
$
f(\sqrt{2})\le 0<f(2),
$
and  $f'(q)>0$ for $q\in[\sqrt{2}, 2)$. This implies that Equation (\ref{e31}) has a unique solution in $[\sqrt{2}, 2)$, and we denote it by $q_{j,k}$. The proof will be finished by the following observation:
\renewcommand{\labelenumi}{(\roman{enumi})}
\begin{enumerate}
 \item for each $j\ge 1$ the sequence $q_{j,k}$ is strictly increasing as $k\ra\f$;
  \item for each $k\ge 1$ the sequence $q_{j,k}$ is strictly increasing as $j\ra\f$.
  \end{enumerate}
 By symmetry  we only give the proof of (i). For simplicity we write  $q_k=q_{j,k}$. Then by (\ref{e31}) we have
  \begin{equation*}
  \begin{split}
q_k^{-k}+q_k^{-j}+q_k^2-q_k-2=0&=q_{k+1}^{-k-1}+q_{k+1}^{-j}+q_{k+1}^2-q_{k+1}-2\\
  &<q_{k+1}^{-k}+q_{k+1}^{-j}+q_{k+1}^2-q_{k+1}-2
  \end{split}
  \end{equation*}
  i.e.,
  $$
  f(q_k)<f(q_{k+1}).
  $$
This implies $q_k<q_{k+1}$, since $f$ is strictly increasing in $[\sqrt{2}, 2)$.
\qed
\end{proof}

Based on Lemma \ref{l32} we give a characterization of   $\M{2}$ (see also, \cite{Baker_Sidorov_2014, Sidorov_2009}).
\begin{lemma}\label{l33}
 $x\in{\M{2}} $ if, and only if, there exists a finite word $d_1\cdots d_n\in\{0,1\}^n$ with $n\ge 0$ such that
  $$
  T_{ d_1\cdots d_n}(x)\in\A_1 \quad\textrm{and}\quad T_{ d_1\cdots d_i}(x)\notin S_{q_2}
  $$
  for all $0\le i<n$.
\end{lemma}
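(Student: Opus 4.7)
The plan rests on a single dynamical observation about how the cardinality $|\Sigma(\cdot)|$ transforms under one application of $T_0$ or $T_1$. If $y\in I_{q_2}\setminus S_{q_2}$, then exactly one of the partial maps $T_0,T_1$ is defined at $y$---namely $T_0$ when $y\in[0,1/q_2)$ and $T_1$ when $y\in(1/(q_2^2-q_2),1/(q_2-1)]$---so every $q_2$-expansion of $y$ must start with the corresponding digit $s$, and stripping this forced digit gives a bijection between $\Sigma(y)$ and $\Sigma(T_s(y))$; hence $|\Sigma(y)|=|\Sigma(T_s(y))|$. If instead $y\in S_{q_2}$, both $T_0(y)$ and $T_1(y)$ lie in $I_{q_2}$, every expansion of $y$ begins either with $0$ or with $1$, and the two families are disjoint, so $|\Sigma(y)|=|\Sigma(T_0(y))|+|\Sigma(T_1(y))|$.

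For the \emph{if} direction, suppose the word $d_1\cdots d_n$ as in the statement exists. Iterating the first case of the observation $n$ times---justified by $T_{d_1\cdots d_i}(x)\notin S_{q_2}$ for $0\le i<n$---gives $|\Sigma(x)|=|\Sigma(T_{d_1\cdots d_n}(x))|$. Writing $y:=T_{d_1\cdots d_n}(x)\in\A_1$, the discussion preceding Lemma \ref{l32} (which uses Theorem \ref{t1} to forbid $|\Sigma|\in\{3,4\}$ for $q=q_2$) ensures $|\Sigma(T_0(y))|=|\Sigma(T_1(y))|=1$, and the second case of the observation then yields $|\Sigma(y)|=2$. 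Hence $x\in\M{2}$.

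For the \emph{only if} direction, given $x\in\M{2}$, run the deterministic process $y_0:=x$ and, while $y_i\notin S_{q_2}$, let $d_{i+1}$ be the unique forced digit at $y_i$ and set $y_{i+1}:=T_{d_{i+1}}(y_i)$. The crux is termination: if this process never halts, the forced digit sequence $(d_i)$ would be the only $q_2$-expansion of $x$, contradicting $|\Sigma(x)|=2$; hence it stops at a smallest $n\ge 0$ with $y_n\in S_{q_2}$, and by construction $T_{d_1\cdots d_i}(x)\notin S_{q_2}$ for $0\le i<n$. The first case of the observation gives $|\Sigma(y_n)|=|\Sigma(x)|=2$, and the second case then forces $|\Sigma(T_0(y_n))|+|\Sigma(T_1(y_n))|=2$; since every point of $I_{q_2}$ admits at least one $q_2$-expansion (e.g.\ the greedy one), each summand equals $1$, i.e.\ $T_0(y_n),T_1(y_n)\in\M{1}$, which is exactly $y_n\in\A_1$. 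Beyond this termination argument and the invocation of Theorem \ref{t1} to eliminate intermediate finite cardinalities, the proof is routine bookkeeping.
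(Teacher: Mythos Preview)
Your proof is correct and follows essentially the same approach as the paper's. The paper's necessity argument takes the two expansions $(a_i),(b_i)$ of $x$, finds the first index $k$ where they differ, and sets $d_1\cdots d_n=a_1\cdots a_{k-1}$; your dynamical process of following forced digits until hitting $S_{q_2}$ produces exactly this common prefix, and your additive formula $|\Sigma(y)|=|\Sigma(T_0(y))|+|\Sigma(T_1(y))|$ is precisely what forces the two tails into $\M{1}'$.
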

\begin{proof}
The sufficiency    follows by Lemma \ref{l32}. For the necessity, we take $x\in\M{2}$, and let $(a_i)$ and $(b_i)$ be the   two $q_2$-expansions of $x$, i.e.,
$$
 ((a_i)) _{q_2} =  ((b_i)) _{q_2} =x.
$$

Let $k\ge 1$ be the least integer such that $a_k\ne b_k$. Then
$$a_{k+1}a_{k+2}\cdots\in{\M{1}'},\quad b_{k+1}b_{k+2}\cdots\in {\M{1}'},$$
and therefore
$$
T_{a_1\cdots a_{k-1}}(x)= (a_{k}a_{k+1}\cdots)_{q_2} = (b_{k}b_{k+1}\cdots)_{q_2} \in\A_1.
$$
Moreover, for any $i<k-1$ we have $T_{a_1\cdots a_i}(x)\notin S_{q_2}$, since otherwise   the point
$
x
$
 will have more than two $q_2$-expansions which contradicts to $x\in\M{2}$.

Therefore, the necessity follows by taking $d_1\cdots d_n=a_1\cdots a_{k-1}$.
\qed
\end{proof}

Note that $((\overline{d_i}))_{q_2}=1/(q_2-1)-((d_i))_{q_2}$. Similar to $\M{1}$ we prove that $\M{2}$ is also symmetric.
\begin{lemma}\label{l34}
$x\in\M{2}$ if and only if $1/(q_2-1)-x\in\M{2}$.
\end{lemma}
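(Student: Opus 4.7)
The plan is to exploit the reflection symmetry already recorded in the paper, namely $((\overline{d_i}))_{q_2} = 1/(q_2-1) - ((d_i))_{q_2}$. This identity immediately suggests that taking reflections sets up a bijection between $q_2$-expansions of $x$ and $q_2$-expansions of $1/(q_2-1) - x$.

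First I would define the reflection map $\phi : \{0,1\}^\infty \to \{0,1\}^\infty$ by $\phi((d_i)) = (\overline{d_i}) = (1-d_i)$, and observe that $\phi$ is an involution, hence a bijection of $\{0,1\}^\infty$ onto itself. Next I would verify, directly from the definition of $\Sigma(x)$ and the identity
\begin{equation*}
\sum_{i=1}^\infty \frac{1-d_i}{q_2^i} = \sum_{i=1}^\infty \frac{1}{q_2^i} - \sum_{i=1}^\infty \frac{d_i}{q_2^i} = \frac{1}{q_2-1} - x,
\end{equation*}
that $\phi$ restricts to a bijection $\Sigma(x) \to \Sigma\bigl(1/(q_2-1) - x\bigr)$, with inverse given by $\phi$ itself. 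Consequently the cardinalities agree: $|\Sigma(x)| = |\Sigma(1/(q_2-1) - x)|$.

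Applying this with cardinality equal to $2$ gives $x \in \M{2}$ if and only if $1/(q_2-1) - x \in \M{2}$, as required. There is no real obstacle here; the only point worth checking carefully is that $x \in I_{q_2}$ if and only if $1/(q_2-1) - x \in I_{q_2}$, which is obvious since $I_{q_2} = [0, 1/(q_2-1)]$ is symmetric about its midpoint. So the argument is essentially a one-line consequence of the reflection identity, and the same reasoning in fact shows that $\M{k}$ is symmetric under $x \mapsto 1/(q_2-1) - x$ for every $k$, including $k = 1$ (compare Lemma \ref{l31}) and $k = \aleph_0$.
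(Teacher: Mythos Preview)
Your argument is correct. The reflection map $\phi$ is indeed a bijection from $\Sigma(x)$ onto $\Sigma\bigl(1/(q_2-1)-x\bigr)$, so the two sets have the same cardinality, and the case $|\Sigma(x)|=2$ gives the lemma.

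This is a genuinely different and more economical route than the paper's. The paper argues via the characterization of $\M{2}$ in Lemma~\ref{l33}: starting from $x\in\M{2}$ it produces a word $d_1\cdots d_n$ with $T_{d_1\cdots d_n}(x)\in\A_1$ and no earlier visit to $S_{q_2}$, then verifies the identity $T_{\overline{d_1\cdots d_n}}\bigl(1/(q_2-1)-x\bigr)=1/(q_2-1)-T_{d_1\cdots d_n}(x)$ and uses the symmetry of $\A_1$ (Lemma~\ref{l32}) and of $S_{q_2}$ to conclude that $1/(q_2-1)-x$ satisfies the same characterization. Your approach bypasses Lemmas~\ref{l32} and~\ref{l33} entirely and, as you observe, yields the symmetry of every $\M{k}$ at once. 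The paper's route, on the other hand, makes explicit the conjugacy relation between the maps $T_s$ and $T_{1-s}$ under reflection, which is the same computation that reappears in the proof of Lemma~\ref{l43}; so while your proof of Lemma~\ref{l34} is cleaner, the paper's computation is not wasted effort in the overall development.
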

\begin{proof}
  Let $x\in\M{2}$. By Lemma \ref{l33} it follows that there exists $d_1\cdots d_n\in\set{0,1}^n$ such that
  $
  T_{d_1\cdots d_n}(x)\in\A_1.
  $
 Then by Lemma \ref{l32} we obtain that
\begin{equation*}
  \begin{split}
    T_{\overline{d_1\cdots d_n}}\left(\frac{1}{q_2-1}-x\right)&=q_2^n\left(\frac{1}{q_2-1}-x\right)-\sum_{i=1}^n q_2^{n-i}(1-d_{n-i+1})\\
    &= \frac{1}{q_2-1}-\left(q_2^n x-\sum_{i=1}^n q_2^{n-i}d_{n-i+1}\right)\\
    &=\frac{1}{q_2-1}-T_{d_1\cdots d_n}(x)\\
    &\in\A_1.
  \end{split}
\end{equation*}
Furthermore, for all $0\le i<n$ we have $T_{d_1\cdots d_i}(x)\notin S_{q_2}$ if, and only if,
$$T_{\overline{d_1\cdots d_i}}\left(\frac{1}{q_2-1}-x\right)=\frac{1}{q_2-1}-T_{d_1\cdots d_i}(x)\notin S_{q_2}.$$
By using Lemma \ref{l33} this implies  $1/(q_2-1)-x\in \M{2}$.
\qed\end{proof}

In terms of Lemma 3.3 we still need to investigate all of those finite words $d_1\cdots d_n$ such that
$$(d_1\cdots d_n c_1 c_2\cdots)_{q_2}\in\M{2}\quad\textrm{with}\quad ((c_i))_{q_2}\in\A_1.$$
By Lemmas \ref{l32} and \ref{l34} it suffices to consider the case for $((c_i))_{q_2}=(01(10)^\f)_{q_2}$.
\begin{lemma}\label{l35}
For $n\ge 2$    let
  $\eta _n:=d_1\cdots d_n(10)^\f \in\M{2}' $ {with} $d_{n-1}d_n=01.$ Then the following statements hold.
   \renewcommand{\labelenumi}{(\Alph{enumi})}
  \begin{enumerate}
   \item If $d_1 d_2=00$, then $0\eta_n \in\M{2}', 1\eta _n\notin\M{2}'$;
    \item  If $d_1 d_2=11$, then $1\eta_n \in\M{2}', 0\eta_n \notin\M{2}'$;
    \item If $d_1d_2=01$, then $0\eta _n, 1\eta _n\in\M{2}'$;
    \item If $d_1d_2= 10$, then $0\eta _n, 1\eta _n\in\M{2}'$.
  \end{enumerate}
\end{lemma}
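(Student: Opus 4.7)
My plan is to translate the memberships $0\eta_n \in \M{2}'$ and $1\eta_n \in \M{2}'$ into numerical inequalities on $x := (\eta_n)_{q_2}$, and then verify these inequalities from case-specific bounds on $x$. Set $y_0 := x/q_2$ and $y_1 := (1+x)/q_2$. Since $S_{q_2} = [1/q_2, 1/(q_2(q_2-1))]$, one checks $y_0 \in S_{q_2} \iff x \ge 1$ (the right-hand bound being automatic from $x \in I_{q_2}$) and $y_1 \in S_{q_2} \iff x \le (2-q_2)/(q_2-1)$. When $y_0 \notin S_{q_2}$, every $q_2$-expansion of $y_0$ must start with $0$, so $|\Sigma(y_0)| = |\Sigma(x)| = 2$; when $y_0 \in S_{q_2}$, the relation $|\Sigma(y_0)| = |\Sigma(x)| + |\Sigma(x-1)|$ together with $x-1 \in I_{q_2}$ forces $|\Sigma(y_0)| \ge 3$. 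A symmetric analysis applies to $y_1$. The lemma therefore reduces to the two equivalences
\[
0\eta_n \in \M{2}' \iff x < 1,\qquad 1\eta_n \in \M{2}' \iff x > (2-q_2)/(q_2-1).
\]

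With this reduction, each of (A)--(D) follows by combining the digit interval for $x$ with the $S_{q_2}$-avoidance chain $T_{d_1\cdots d_i}(x) \notin S_{q_2}$, $0 \le i \le n-3$, that Lemma \ref{l33} imposes on any $\eta_n \in \M{2}'$. In (A), the prefix $d_1 d_2 = 00$ gives $x \le 1/(q_2^2(q_2-1))$, and for $n \ge 4$ the constraint $T_0(x) \notin S_{q_2}$ sharpens this to $x < 1/q_2^2 < (2-q_2)/(q_2-1)$; the edge case $n = 3$ gives the explicit value $x = a/q_2$ with $a := (01(10)^\infty)_{q_2}$, and $a/q_2 < (2-q_2)/(q_2-1)$ is verified directly. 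In (B) (where $n \ge 4$ is forced), $x \ge (q_2+1)/q_2^2$ together with $T_1(x) \notin S_{q_2}$ yields $x > (q_2^2 - q_2 + 1)/(q_2^2(q_2-1)) > 1$. In (C), starting from $x \in [1/q_2^2, 1/(q_2(q_2-1))]$, the constraint $x \notin S_{q_2}$ and (for $n \ge 4$) $T_0(x) \notin S_{q_2}$ trap $x \in (1/(q_2^2(q_2-1)), 1/q_2) \subset ((2-q_2)/(q_2-1), 1)$; the base case $n = 2$ has $x = a$ and is checked explicitly. Case (D) is dual: for $n \ge 4$ the $T_1$-avoidance narrows $x$ to $(1/(q_2(q_2-1)), (q_2+1)/q_2^2) \subset ((2-q_2)/(q_2-1), 1)$, and $n = 3$ gives $x = (a+1)/q_2$, for which $a+1 < q_2$ is verified directly.

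The main obstacle is proving the handful of polynomial inequalities in $q_2$ on which the above containments rest: $1/q_2^2 < (2-q_2)/(q_2-1)$, $1/(q_2^2(q_2-1)) > (2-q_2)/(q_2-1)$, $(q_2+1)/q_2^2 < 1$, $(q_2^2 - q_2 + 1)/(q_2^2(q_2-1)) > 1$, together with the base-case bounds $a/q_2 < (2-q_2)/(q_2-1)$ and $(a+1)/q_2 < 1$. After clearing denominators each becomes an inequality of degree at most $5$ in $q_2$ that simplifies under the minimal polynomial $q_2^4 = 2q_2^2 + q_2 + 1$, or reduces to $q_2 > q_G$. The small-$n$ edge cases ($n = 2$ in (C), $n = 3$ in (A) and (D)) form the only non-uniform part of the argument, since Lemma \ref{l33} provides strictly fewer avoidance conditions there; each such case is disposed of by substituting the explicit value of $x$ (one of $a$, $a/q_2$, $(a+1)/q_2$) and checking a single inequality, after which the general-$n$ argument proceeds uniformly across all four cases.
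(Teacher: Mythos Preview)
Your reduction to the equivalences $0\eta_n\in\M{2}'\iff x<1$ and $1\eta_n\in\M{2}'\iff x>(2-q_2)/(q_2-1)$ is exactly what the paper does, phrased there as checking whether $f_s((\eta_n)_{q_2})=(x+s)/q_2$ lies in $S_{q_2}$. The approaches diverge only in how the required bounds on $x$ are extracted from Lemma~\ref{l33}. The paper observes once and for all that the words $111$ and $000$ cannot occur anywhere in $\eta_n$ (since any tail $0111\cdots$ or $1000\cdots$ would land in $S_{q_2}$), and this immediately yields uniform two-sided bounds such as $(\eta_n)_{q_2}\le(00(110)^\infty)_{q_2}$ in case~(A) and $(\eta_n)_{q_2}\ge(01(001)^\infty)_{q_2}$ in case~(C), with no dependence on $n$. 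You instead use only the first one or two avoidance conditions $T_{d_1\cdots d_i}(x)\notin S_{q_2}$, which is why you need separate treatment of the small-$n$ base cases. Both routes are correct; the paper's block-exclusion argument is more uniform, while yours is more elementary but carries the small overhead of the explicit checks at $n=2,3$. (One small omission: in case~(C) you should note that $n=3$ is vacuous, since $d_1d_2=01$ and $d_2d_3=01$ force contradictory values of $d_2$; your write-up jumps from $n=2$ to $n\ge 4$ without saying so.)
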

\begin{proof}
  By symmetry we only prove (A) and (C).

  First we prove (A). Suppose $d_1d_2=00$. Then  $n\ge 3$, and by Lemma \ref{l33} it suffices to prove that
  $$
  f_0((\eta_n)_{q_2})\notin S_{q_2},\quad f_1((\eta_n)_{q_2})\in S_{q_2},
  $$
  where
\begin{equation*}
  f_s(x):=\frac{x+s}{q_2},\quad s\in\{0,1\}.
  \end{equation*}

It is obvious that
$$f_0((\eta_n)_{q_2})< (0001^\f)_{q_2}<\frac{1}{q_2}.$$ This implies $f_0( (\eta_n)_{q_2})\notin S_{q_2}$.
Note that
$$
\frac{1}{q_2}<(01110^\f)_{q_2}\le (0111a_1a_2\cdots)_{q_2}\le (01^\f)_{q_2}=\frac{1}{q_2(q_2-1)},
$$
 for any $(a_i)\in \{0,1\}^\f$. By Lemma \ref{l33} it follows that the
word $111$ can not appear in $\eta_n =00d_3\cdots d_{n}(10)^\f$, and therefore,
$$
(\eta_n)_{q_2}=  (00d_3\cdots d_{n}(10)^\f)_{q_2}\le  (00(110)^\f)_{q_2}.
$$
This implies
$$
\frac{1}{q_2}\le f_1( (\eta_n)_{q_2})= (100d_3\cdots d_n   (01)^\f)_{q_2} \le (100(110)^\f)_{q_2} <\frac{1}{q_2(q_2-1)}.
$$
So, (A) is verified.

Now  we turn to the proof of $(C)$. Suppose $d_1d_2=01$. One can easily check that
$$f_0((\eta_n)_{q_2})< (001^\f)_{q_2}<1/q_2,$$
implying  $f_0( (\eta_n)_{q_2})\notin S_{q_2}$. Note that
$$ \frac{1}{q_2}=(10^\f)_{q_2}\le (1000b_1b_2\cdots)_{q_2}\le(10001^\f)_{q_2}<\frac{1}{q_2(q_2-1)}$$
 for any $(b_i)\in\set{0,1}^\f$. Then  by Lemma \ref{l33} it follows that the word $000$ can not appear in
$\eta_n =01d_3\cdots d_n (10)^\f$, and therefore
$$
 (\eta_n)_{q_2}= (01d_3\cdots d_n(10)^\f)_{q_2}\ge (01(001)^\f)_{q_2}.
$$
This implies that
$$
f_1( (\eta_n)_{q_2})= (101d_3\cdots d_n 01(10)^\f)_{q_2} > (101(001)^\f)_{q_2} >\frac{1}{q_2(q_2-1)}.
$$
 Therefore, (C) holds.
 \qed
\end{proof}

Now we give a complete description of $\M{2}$ based on Lemmas \ref{l32}--\ref{l35}.
\begin{theorem}\label{t36}
 \begin{equation*}
 \begin{split}
 \M{2}=\bigcup_{m=0}^\f\bigcup_{k=1}^\f\set{(0^m \ep_k)_{q_2}, (1^m \ep_k)_{q_2}, (\overline{0^m\ep_k})_{q_2},(\overline{1^m\ep_k})_{q_2} },
 \end{split}
 \end{equation*}
 where
\begin{equation}\label{e32}
\ep_k:=(01)^k(10)^\f\quad\textrm{for}~k\ge 1.
\end{equation}
\end{theorem}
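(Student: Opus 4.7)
The plan is to characterize the set $\M{2}'$ of two-expansions by combining Lemmas \ref{l33}, \ref{l34}, and \ref{l35} inductively, and then pass from sequences to numbers. By Lemma \ref{l33}, every sequence $(a_i)\in\M{2}'$ has the form $a_1\cdots a_{k-1}a_k a_{k+1}\cdots$ with $T_{a_1\cdots a_{k-1}}((a_i)_{q_2})\in\A_1$, and by Lemma \ref{l32} this element of $\A_1$ is one of $(\ep_1)_{q_2}=(01(10)^\f)_{q_2}$ or $(\overline{\ep_1})_{q_2}=(10(01)^\f)_{q_2}$. Using the symmetry provided by Lemma \ref{l34}, it suffices to describe those $(a_i)\in\M{2}'$ whose tail is $(10)^\f$, that is, the sequences of the form $d_1\cdots d_n\, \ep_1$ for some finite prefix (possibly empty, giving $(a_i)=\ep_1$ itself). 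The theorem will then follow once we show that the admissible finite words $\xi=d_1\cdots d_n$ with $d_{n-1}d_n=01$ for which $\xi(10)^\f\in\M{2}'$ are precisely the words of the form $0^m(01)^k$ or $1^m(01)^k$ with $m\ge 0$, $k\ge 1$.

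I would organize the argument by viewing these admissible prefixes as the vertices of a rooted tree, with root $\xi=01$ (corresponding to $\eta_2=\ep_1$) and with an edge from each node $\xi$ to its child $c\xi$ whenever Lemma \ref{l35} permits it. For the inclusion $\M{2}'\subseteq\set{0^m\ep_k,\, 1^m\ep_k,\, \overline{0^m\ep_k},\, \overline{1^m\ep_k}}$, I would proceed by strong induction on $n$. The base case $n=2$ gives $\xi=01=(01)^1$. For the inductive step, write $\xi=c\cdot\xi'$ where $\xi'$ is a shorter admissible prefix, so by the inductive hypothesis $\xi'=0^{m'}(01)^{k'}$ or $1^{m'}(01)^{k'}$. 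Reading off the first two digits of $\xi'$ and applying Lemma \ref{l35} gives four subcases: (i) if $\xi'=0^{m'}(01)^{k'}$ with $m'\ge 1$, case (A) forces $c=0$ and $\xi=0^{m'+1}(01)^{k'}$; (ii) if $\xi'=(01)^{k'}$, case (C) permits $c\in\{0,1\}$ and $\xi\in\set{0(01)^{k'},\,1(01)^{k'}}$; (iii) if $\xi'=1(01)^{k'}$, case (D) permits $c\in\{0,1\}$ and $\xi\in\set{(01)^{k'+1},\,1^{2}(01)^{k'}}$; (iv) if $\xi'=1^{m'}(01)^{k'}$ with $m'\ge 2$, case (B) forces $c=1$ and $\xi=1^{m'+1}(01)^{k'}$. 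In every case $\xi$ retains the desired form.

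For the reverse inclusion, I would run the same process forward from the root: starting at $\ep_1$ and iterating Lemma \ref{l35}, cases (C) and (D) allow one to climb from $\ep_k$ up to $\ep_{k+1}$, while cases (A) and (B) pad with $0^m$ or $1^m$, thereby realizing every $0^m\ep_k$ and $1^m\ep_k$ as an element of $\M{2}'$. Passing to numbers and applying Lemma \ref{l34} to include the reflections $\overline{0^m\ep_k}$ and $\overline{1^m\ep_k}$ then yields the formula for $\M{2}$ stated in the theorem. The main delicate point will be the bookkeeping in case (iii), the subcase $m'=1$, since this is where the transitions $\ep_k\mapsto\ep_{k+1}$ happen; ensuring that each $\ep_{k+1}$ is produced exactly when needed, and that no spurious form escapes the four parameterized families, is essentially the entire combinatorial content of the proof.
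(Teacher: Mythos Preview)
Your proposal is correct and follows essentially the same route as the paper: both directions rest on Lemma~\ref{l32}, Lemma~\ref{l33}, and an iteration of Lemma~\ref{l35}, with Lemma~\ref{l34} supplying the reflected half. The only difference is one of explicitness: the paper compresses the inclusion $\M{2}\subseteq\bigcup_{m,k}\{\cdots\}$ into a single invocation of Lemma~\ref{l35}, whereas you spell out the underlying induction on the prefix length and the four-way case split on $d_1d_2$, which is exactly what that invocation hides.
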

\begin{proof}
By Lemma \ref{l35} it follows that
$$
0^m\ep_k, 1^m\ep_k\in\M{2}'
$$
for all $m\ge 0$ and $k\ge 1$. Then by Lemmas \ref{l34} we obtain the ``$\supseteq$"  part.

For the ``$\subseteq$" part, we take $x\in\M{2}$. Then by Lemmas \ref{l32} and \ref{l33}  there exists a word $d_1\cdots d_n$ such that
$$
T_{d_1\cdots d_n}( x)\in\A_1=\set{ (01(10)^\f)_{q_2},   (10(01)^\f)_{q_2}}
$$
and
$$
T_{d_1\cdots d_i}( x)\notin S_{q_2}
$$
for any $0\le i<n$.

Without loss of generality we assume $T_{d_1\cdots d_n}( x)= (01(10)^\f)_{q_2}$. Then
$$
x=(d_1\cdots d_n01(10)^\f)_{q_2},
$$
and hence by Lemma \ref{l35} it follows that
$$
x=(d_1\cdots d_n01(10)^\f)_{q_2}\in\bigcup_{m=0}^\f\bigcup_{k=1}^\f\set{(0^m\ep_k)_{q_2}, (1^m\ep_k)_{q_2}}.
$$
\qed
\end{proof}
By Lemma \ref{l31} and Theorem \ref{t36} we have the following connection between $\M{1}$ and $\M{2}$.
\begin{corollary}\label{c37}
The set of all accumulation points of $\M{2}$ is $\M{1}$.
\end{corollary}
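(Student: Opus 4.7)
The plan is to prove the two inclusions separately, leveraging the explicit descriptions of $\M{1}$ from Lemma \ref{l31} and $\M{2}$ from Theorem \ref{t36}, and exploiting the elementary identities $\ep_k \to (01)^\f = 0(10)^\f$ and $(10)^k(01)^\f \to (10)^\f$ as $k\ra\f$ in the product topology on $\{0,1\}^\f$, which translate into convergence under the map $(d_i)\mapsto((d_i))_{q_2}$.

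First I would establish $\M{1}\subseteq\overline{\M{2}}\setminus\M{2}$ by exhibiting, for each element of $\M{1}$, an explicit sequence in $\M{2}$ of distinct points tending to it. Specifically: (i) for the endpoints, $(0^n\ep_1)_{q_2}\ra 0=(0^\f)_{q_2}$ and $(1^n\ep_1)_{q_2}\ra 1/(q_2-1)=(1^\f)_{q_2}$ as $n\ra\f$; (ii) for $k\ge 1$, $(0^{k-1}\ep_n)_{q_2}\ra(0^{k-1}(01)^\f)_{q_2}=(0^k(10)^\f)_{q_2}$, and $(\overline{1^0\ep_n})_{q_2}=((10)^n(01)^\f)_{q_2}\ra((10)^\f)_{q_2}$ covers $k=0$; (iii) for $k\ge 0$, $(1^k\ep_n)_{q_2}\ra(1^k(01)^\f)_{q_2}$. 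Each target sits in $\M{1}\subset\M{1}\cap(\R\setminus\M{2})$, so these are genuine accumulation points.

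For the converse inclusion, let $y$ be an accumulation point of $\M{2}$, and pick a sequence of pairwise distinct $x_n\in\M{2}$ with $x_n\ra y$. By Theorem \ref{t36} and the pigeonhole principle, a subsequence lies in one of the four families $(0^{m}\ep_k)_{q_2}$, $(1^{m}\ep_k)_{q_2}$, $(\overline{0^{m}\ep_k})_{q_2}$, $(\overline{1^{m}\ep_k})_{q_2}$, indexed by the parameter pairs $(m_n,k_n)\in\Z_{\ge 0}\times\Z_{\ge 1}$, and distinctness forces either $m_n\ra\f$ or $k_n\ra\f$ along a further subsequence. A case analysis then identifies the limit: if $m_n\ra\f$, the sequence tends to $0$ or to $1/(q_2-1)$ depending on whether the leading block is $0^{m}$ or $1^{m}$; if $m_n\equiv m$ is constant and $k_n\ra\f$, the sequence tends to one of $(0^{m+1}(10)^\f)_{q_2}$, $(1^{m}(01)^\f)_{q_2}$, $(1^{m+1}(01)^\f)_{q_2}$, $(0^{m}(10)^\f)_{q_2}$ respectively (using $(01)^\f=0(10)^\f$ in the first and third cases), all of which lie in $\M{1}$.

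I do not anticipate a real obstacle; this is essentially a bookkeeping argument. The only subtlety worth spelling out is the identification $(01)^\f=0(10)^\f$, which is what makes the four families in Theorem \ref{t36} accumulate at exactly the two symmetric orbits comprising $\M{1}$, and the observation that $\M{1}\cap\M{2}=\emptyset$, so every element of $\M{1}$ constructed above is a bona fide accumulation (not a member) of $\M{2}$.
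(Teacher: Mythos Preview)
Your proposal is correct and is precisely the bookkeeping argument the paper leaves implicit: the paper states Corollary~\ref{c37} without proof, simply as a consequence of Lemma~\ref{l31} and Theorem~\ref{t36}, and you have spelled out that deduction in both directions. The only cosmetic point is that in the converse direction, when $m_n$ is bounded you should explicitly pass to a further subsequence on which $m_n$ is constant before letting $k_n\ra\f$; this is routine and your phrase ``along a further subsequence'' already signals it.
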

By Theorem \ref{t36} and Corollary \ref{c37} it follows that $\M{2}$ is a discrete set containing countably infinitely    many elements. Furthermore, $\M{2}$ is not closed. This is opposite to $\M{1}$, since we know by Lemma \ref{l31} that $\M{1}$ is not discrete but closed.

\section{Proof of Theorem \ref{t2}}\label{sec:proof of theorem}
In this section we will prove $q_2\notin\B_{\aleph_0}$.
In terms of Proposition \ref{p22},   it suffices to prove that  $\A_2\cap J_{q_2}$ contains no $q_2$-null infinite points, where
\begin{equation}\label{eq:41}
\begin{split}
J_{q_2}=[ ((0110)^\f)_{q_2}, ((1001)^\f)_{q_2}]\approx[0.613089, 0.794085].
\end{split}
\end{equation}
Recall that $\A_2$ is the set of $x\in S_{q_2}$ such that
$|\Sigma(T_s(x))|<\f$ and $|\Sigma(T_{1-s}(x))|=\f$ for some $s\in\set{0,1}$. By Theorem \ref{t1} it follows that $T_s(x)\in\M{1}\cup\M{2}$, and therefore
$$
\A_2\subseteq\bigcup_{s=0}^1 T_s^{-1}(\M{1}\cup\M{2}).
$$

 In the following lemma we give a characterization of $\A_2 \cap J_{q_2}$. For a real number $r$ and a set $A$ we define
 $r-A=\set{r-a: a\in A}$.
\begin{lemma}\label{l41}
$$\A_2\cap J_{q_2}=\bigcup_{m=1}^4 \left(E_m\cup\Big(\frac{1}{q_2-1}-E_m\Big)\right),$$ where
\begin{equation*}
E_m :=  \bigcup_{k=1}^\f\set{(01^{m+1}\ep_k)_{q_2 }, (10^m\ep_k)_{q_2 }}\setminus\set{  (10\ep_1)_{q_2}}
\end{equation*}
for $m=1, 3$, and
\begin{equation*}
E_m :=\set{(01^m(10)^\f)_{q_2 }} \cup  \bigcup_{k=1}^\f\set{(01^{m+1}\ep_k)_{q_2 }, (10^m\ep_k)_{q_2 }}.
\end{equation*}
 for $m=2,4$.
\end{lemma}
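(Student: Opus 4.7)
The plan is to enumerate $\A_2\cap J_{q_2}$ by classifying $x\in S_{q_2}$ according to which of $T_0(x), T_1(x)$ lies in $\M{1}\cup\M{2}$ (which by Theorem~\ref{t1} is exactly the finite-expansion condition). I would first exploit symmetry: the involution $\iota(x):=1/(q_2-1)-x$ satisfies $T_{1-s}(\iota(x))=\iota(T_s(x))$, and the sets $\M{1}\cup\M{2}$ (by Lemmas~\ref{l31} and \ref{l34}), $S_{q_2}$, and $J_{q_2}$ are all $\iota$-invariant (the last because the digit-wise complement of $(0110)^\f$ is $(1001)^\f$). Hence $\iota$ interchanges the cases $T_0(x)\in\M{1}\cup\M{2}$ and $T_1(x)\in\M{1}\cup\M{2}$, so it suffices to find $x\in J_{q_2}$ with $T_0(x)\in\M{1}\cup\M{2}$ and $x\notin\A_1$; applying $\iota$ recovers the rest.

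Next, I would parametrize candidates. Since $x\in S_{q_2}$ forces $T_0(x)\in[1,1/(q_2-1)]$, every $q_2$-expansion of $T_0(x)$ starts with $1$. Listing the sequences in $\M{1}'\cup\M{2}'$ beginning with $1$ via Lemma~\ref{l31} and Theorem~\ref{t36}, each candidate has the form $x=(0a)_{q_2}$ with
$$a\in\{1^\f\}\cup\{1^n(01)^\f:n\ge 1\}\cup\{1^m\ep_k:m,k\ge 1\}\cup\{1^m\overline{\ep_k}:m\ge 0,k\ge 1\}.$$

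The heart of the argument is the interval-containment test $x\in J_{q_2}$, which I would carry out using the defining identity $q_2^4=2q_2^2+q_2+1$ and the explicit values $((0110)^\f)_{q_2}=(q_2^2+q_2)/(q_2^4-1)$, $((1001)^\f)_{q_2}=(q_2^3+1)/(q_2^4-1)$. The family $1^\f$ produces a value exceeding $((1001)^\f)_{q_2}$. The family $1^n(01)^\f$, after the identification $(01^n(01)^\f)_{q_2}=(01^{n-1}(10)^\f)_{q_2}$, contributes points in $J_{q_2}$ exactly for $n\in\{2,3,4,5\}$, of which $n=2$ and $n=4$ lie in $\A_1$ by Lemma~\ref{l32}; the survivors $n=3,5$ give the extra points $(01^m(10)^\f)_{q_2}$ in $E_2$ and $E_4$. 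The family $1^m\ep_k$ yields $(01^{m+1}\ep_k)_{q_2}\in J_{q_2}$ exactly for $m\in\{1,2,3,4\}$, $k\ge 1$. After applying $\iota$, the dual family produces $(10^m\ep_k)_{q_2}$ for $m\in\{1,2,3,4\}$, $k\ge 1$, except that the point $(10\ep_1)_{q_2}=(1001(10)^\f)_{q_2}$ exceeds $((1001)^\f)_{q_2}$ and must be removed, accounting for the $\setminus\{(10\ep_1)_{q_2}\}$ in $E_1$ (vacuous in $E_3$). Regrouping by the length $m$ of the initial block of repeated digits yields the stated $E_m$, and the mirror components $1/(q_2-1)-E_m$ follow by $\iota$.

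The main obstacle is the interval-containment step. The cutoff $m\le 4$ and the single boundary exclusion $(10\ep_1)_{q_2}$ both depend on fine estimates of $q_2$-geometric sums against the period-$4$ endpoints of $J_{q_2}$; these in turn require careful use of the minimal polynomial $q_2^4-2q_2^2-q_2-1=0$. The asymmetric form of $E_m$ for even versus odd $m$ is accounted for by whether the corresponding $n$-case in the family $1^n(01)^\f$ is absorbed into $\A_1$ or survives as an extra point $(01^m(10)^\f)_{q_2}$.
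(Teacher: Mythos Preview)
Your approach is essentially the same as the paper's: both compute $\bigcup_{s}T_s^{-1}(\M{1}\cup\M{2})\cap J_{q_2}$ using Lemma~\ref{l31} and Theorem~\ref{t36}, remove the two $\A_1$ points from Lemma~\ref{l32}, and invoke the reflection symmetry $x\mapsto 1/(q_2-1)-x$ to pair the $T_0$- and $T_1$-cases. The only cosmetic slip is that prepending $0$ to $1^m\ep_k$ gives $01^m\ep_k$, not $01^{m+1}\ep_k$; after the obvious reindexing your range $m\in\{1,2,3,4\}$ and the exceptional exclusion of $(10\ep_1)_{q_2}$ agree with the paper's computation.
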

\begin{proof}
Note that
$J_{q_2}=[ ((0110)^\f)_{q_2}, ((1001)^\f)_{q_2}]$.
This yields
$$
(01^5(10)^\f)_{q_2}>((1001)^\f)_{q_2}, \quad(10^5(01)^\f)_{q_2}<((0110)^\f)_{q_2}.
$$
 Then by Lemma \ref{l31}  we obtain that
 \begin{equation}\label{eq:1}
  \bigcup_{s=0}^1 T_s^{-1}(\M{1} )\cap J_{q_2}= \bigcup_{m=1}^4\set{(01^m(10)^\f)_{q_2}, (10^m(01)^\f)_{q_2}}.
  \end{equation}
Furthermore, by Theorem \ref{t36} and (\ref{eq:41}) it follows that
\begin{equation}\label{eq:2}
\begin{split}
\bigcup_{s=0}^1 T_s^{-1}(\M{2} )\cap J_{q_2}&= \bigcup_{m=1}^4\bigcup_{k=1}^\f\set{(01^{m+1}\ep_k)_{q_2}, (10^m\ep_k)_{q_2}}\setminus\set{(10\ep_1)_{q_2}}\\
&\quad\cup\bigcup_{m=1}^4\bigcup_{k=1}^\f\set{ (\overline{01^{m+1}\ep_k})_{q_2}, (\overline{10^m\ep_k})_{q_2}}\setminus\set{(\overline{10\ep_1})_{q_2}}.
\end{split}
\end{equation}

Note by Lemma \ref{l32}   that
\begin{equation}\label{eq:3}
\A_1=\set{(01(10)^\f)_{q_2}, (01^3(10)^\f)_{q_2}},
\end{equation} and by Theorem  \ref{t1} that
$$\A_1\cup\A_2 =\bigcup_{s=0}^1 T_s^{-1}(\M{1}\cup\M{2}).$$
Therefore, the lemma follows by using (\ref{eq:1})--(\ref{eq:3}) in the following equation:
$$
\A_2\cap J_{q_2}=\left(\Big(\bigcup_{s=0}^1 T_s^{-1}(\M{1})\cap J_{q_2}\Big)\cup\Big(\bigcup_{s=0}^1 T_s^{-1}\M{2}\cap J_{q_2}\Big)\right)\setminus\A_1.
$$
\qed
\end{proof}

By using Lemma \ref{l41} one can easily verify  the following monotonicity of the elements
 in $\A_2\cap J_{q_2}$.
\begin{lemma}\label{l42}
\begin{enumerate}
  \item For each $m\ge 1$, we have for $k\ra\f$ that
 \begin{equation*}
 \begin{split}
  &(\overline{10^m\ep_k})_{q_2}\nearrow(01^m(10)^\f)_{q_2}, \quad(01^{m+1}\ep_k)_{q_2}\searrow (01^m(10)^\f)_{q_2},\\
 &(\overline{01^{m+1}\ep_k})_{q_2}\nearrow(\overline{01^m(10)^\f})_{q_2},\quad (10^m\ep_k)_{q_2}\searrow(\overline{01^m(10)^\f})_{q_2};
\end{split}
\end{equation*}

  \item $  \A_2 \cap J_{q_2}\subseteq{\mathcal{H}}$, where
  $$
{\mathcal{H}}:=\bigcup_{m=1}^4 \big[ (\overline{10^m\ep_1})_{q_2}, (01^{m+1}\ep_1)_{q_2}\big]\cup\bigcup_{m=1}^4 \big[ (\overline{01^{m+1}\ep_1})_{q_2},  (10^m\ep_1)_{q_2}\big].
  $$\end{enumerate}
\end{lemma}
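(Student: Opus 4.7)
The plan is to verify (1) by a termwise comparison of the expansion sequences and then to derive (2) by feeding (1) into the explicit description of $\A_2\cap J_{q_2}$ given in Lemma \ref{l41}. Both parts are largely bookkeeping.

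For (1), the guiding identities are the sequence equalities $01^m(10)^\f=01^{m+1}(01)^\f$ and (its reflection) $10^{m+1}(10)^\f=10^m(01)^\f$, both of which are immediate on inspection. Each of the four asymptotics then reduces to analyzing how a finite-block perturbation such as $(01)^k(10)^\f$ approaches its limit $(01)^\f$. For example, to prove the first limit I would write $\overline{10^m\ep_k}=01^m(10)^k(01)^\f$, note that this sequence agrees with $01^m(10)^\f$ through position $m+2k+1$, and then compute the value-difference from position $m+2k+2$ onwards: the first sequence contributes the geometric tail $q_2^{-(m+2k+3)}/(1-q_2^{-2})$ while the second contributes $q_2^{-(m+2k+2)}/(1-q_2^{-2})$, yielding a strictly positive gap of order $q_2^{-(m+2k)}$. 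This proves both $(\overline{10^m\ep_k})_{q_2}<(01^m(10)^\f)_{q_2}$ and the convergence as $k\to\f$. The analogous one-step comparison between indices $k$ and $k+1$ furnishes the strict monotonic increase. The remaining three asymptotics are handled by identical calculations using the appropriate identity.

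For (2), Lemma \ref{l41} presents $\A_2\cap J_{q_2}$, for each $m\in\set{1,2,3,4}$, as the union of the two sequences $\set{(01^{m+1}\ep_k)_{q_2}}_{k\ge 1}$ and $\set{(10^m\ep_k)_{q_2}}_{k\ge 1}$, together with their reflections under $x\mapsto 1/(q_2-1)-x$ and, when $m\in\set{2,4}$, with the common limit points $(01^m(10)^\f)_{q_2}$ and $(\overline{01^m(10)^\f})_{q_2}$. Part (1) shows that $\set{(01^{m+1}\ep_k)_{q_2}}$ decreases to $(01^m(10)^\f)_{q_2}$ and $\set{(\overline{10^m\ep_k})_{q_2}}$ increases to the same limit, with extremal values attained at $k=1$; hence both sequences, and the limit whenever it belongs to $\A_2$, lie inside $[(\overline{10^m\ep_1})_{q_2},(01^{m+1}\ep_1)_{q_2}]$. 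Symmetrically, $\set{(10^m\ep_k)_{q_2}}$ and $\set{(\overline{01^{m+1}\ep_k})_{q_2}}$ bracket $(\overline{01^m(10)^\f})_{q_2}$ and lie inside $[(\overline{01^{m+1}\ep_1})_{q_2},(10^m\ep_1)_{q_2}]$. Unioning over $m=1,\ldots,4$ yields $\A_2\cap J_{q_2}\subseteq\mathcal{H}$.

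There is no genuinely difficult step. The main annoyance is the parity-dependent bookkeeping inherited from Lemma \ref{l41} — whether the common limit point belongs to $E_m$ (for $m=2,4$) or is subtracted off as an $\A_1$-point (for $m=1,3$), and likewise the exception $\set{(10\ep_1)_{q_2}}$ — but since $\mathcal{H}$ is a union of closed intervals containing every relevant limit point, these exceptions make no difference to the inclusion.
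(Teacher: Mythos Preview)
Your proposal is correct and follows exactly the approach the paper intends: the paper itself omits the proof entirely, stating only that ``By using Lemma~\ref{l41} one can easily verify'' the lemma. Your detailed termwise comparison for (1) and your derivation of (2) from the explicit description of $\A_2\cap J_{q_2}$ in Lemma~\ref{l41} are precisely the verification the paper leaves to the reader.
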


Let $\N$ be the set of all $q_2$-null infinite points. The following lemma says that $\N$ is symmetric.
\begin{lemma}\label{l43}
  $x\in\N$ if and only if $1/(q_2-1)-x\in\N$.
\end{lemma}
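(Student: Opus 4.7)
The plan is to exploit the same reflection symmetry $\phi(x) := 1/(q_2-1) - x$ that was used in Lemma \ref{l34}. The map $(d_i) \mapsto (\overline{d_i})$ is a bijection between $\Sigma(x)$ and $\Sigma(\phi(x))$, so $|\Sigma(x)| = |\Sigma(\phi(x))|$; in particular $|\Sigma(x)| = \infty$ iff $|\Sigma(\phi(x))| = \infty$. The job is then to check that $\phi$ sends branching points of $x$ to branching points of $\phi(x)$, and that it preserves the class $\mathcal{A}_2$ within $S_{q_2}$.

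First I would establish the two intertwining identities
\[
T_0(\phi(x)) = \phi(T_1(x)), \qquad T_1(\phi(x)) = \phi(T_0(x)),
\]
valid whenever the respective sides are defined. This is a one-line computation of exactly the type performed in the proof of Lemma \ref{l34}, together with the observation that $\phi$ swaps the two domains $[0,1/(q_2^2-q_2)]$ and $[1/q_2, 1/(q_2-1)]$ of $T_0$ and $T_1$. Iterating yields $T_{e_1\cdots e_n}(\phi(x)) = \phi(T_{\overline{e_1\cdots e_n}}(x))$ for every admissible word.

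Second, I would observe that the switch region $S_{q_2} = [1/q_2, 1/(q_2^2-q_2)]$ is symmetric under $\phi$ (its midpoint coincides with the midpoint $1/(2(q_2-1))$ of $I_{q_2}$), and that the classification into $\mathcal{A}_1, \mathcal{A}_2, \mathcal{A}_3$ is preserved by $\phi$: indeed, for $z \in S_{q_2}$ the identities from the previous step give $|\Sigma(T_0(z))| = |\Sigma(T_1(\phi(z)))|$ and $|\Sigma(T_1(z))| = |\Sigma(T_0(\phi(z)))|$, so $z \in \mathcal{A}_i$ iff $\phi(z) \in \mathcal{A}_i$ for each $i = 1, 2, 3$.

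Finally I would put these together: if $x \in \mathcal{N}$ and $w = T_{e_1\cdots e_n}(\phi(x)) \in \mathcal{A}_2 \cup \mathcal{A}_3$ is a branching point of $\phi(x)$, then by the intertwining $w = \phi(T_{\overline{e_1\cdots e_n}}(x))$, so $\phi(w) = T_{\overline{e_1\cdots e_n}}(x)$ is a branching point of $x$, hence $\phi(w) \in \mathcal{A}_2$ by hypothesis, and therefore $w \in \mathcal{A}_2$ by the preservation of $\mathcal{A}_2$ under $\phi$. Thus every branching point of $\phi(x)$ lies in $\mathcal{A}_2$, i.e.\ $\phi(x) \in \mathcal{N}$. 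The reverse implication follows because $\phi$ is an involution. No serious obstacle is expected; the whole argument is bookkeeping on the symmetry already implicit in Lemma \ref{l34}.
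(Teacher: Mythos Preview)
Your proposal is correct and follows essentially the same route as the paper: establish the intertwining $T_{1-s}\circ\phi = \phi\circ T_s$, deduce that $\phi$ preserves $|\Sigma(\cdot)|$ and hence the classes $\mathcal{A}_i$, and conclude that branching points of $\phi(x)$ correspond under $\phi$ to branching points of $x$, forcing membership in $\mathcal{A}_2$. If anything, you are slightly more explicit than the paper in handling the $\mathcal{A}_3$ case when verifying that $\phi(w)$ is indeed a branching point of $x$.
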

\begin{proof}
Note that for $s\in\{0,1\}$ we have
$$
T_{1-s}\left(\frac{1}{q_2-1}-x\right)=\frac{1}{q_2-1}-T_s(x).
$$
This means that $(d_i)$ is a $q_2$-expansion of $T_s(x)$ if and only if $(\overline{d_i})$ is a $q_2$-expansion of $T_{1-s}(1/(q_2-1)-x)$. Therefore,
$$
\left|\Sigma\Big(T_{1-s}\big(\frac{1}{q_2-1}-x\big)\Big)\right|=|\Sigma(T_s(x))|.
$$
This implies that
$$x\in\A_2 \quad\Longleftrightarrow\quad\frac{1}{q_2-1}-x\in\A_2.$$

Furthermore, one can show that
$$
T_{\overline{d_1\cdots d_n}}\Big(\frac{1}{q_2-1}-x\Big)=\frac{1}{q_2-1}-T_{d_1\cdots d_n}(x),
$$
and therefore
$$
T_{d_1\cdots d_n}(x)\in\A_2  \quad\Longleftrightarrow\quad
T_{\overline{d_1\cdots d_n}}\Big(\frac{1}{q_2-1}-x\Big) \in\A_2.
$$
Hence, the lemma follows by the definition of $q_2$-null infinite points.
\qed\end{proof}

In order to prove Theorem \ref{t2} we need some   numerical calculation. By (\ref{e32}) we obtain
\begin{equation}\label{e41}
 \begin{split}
 (01^{m+1}\ep_k)_{q_2}&=\frac{q_2^{m+2k+2}+q_2^{m+2k+1}-q_2^{2k+1}+q_2-1}{q_2^{m+2k+2}(q_2^2-1)},\\
 (10^m\ep_k)_{q_2}&=\frac{q_2^{m+2k+2}-q_2^{m+2k}+q_2^{2k}+q_2-1}{q_2^{m+2k+1}(q_2^2-1)}.
 \end{split}
\end{equation}

Then by Lemma \ref{l42} we give the approximate values for intervals of ${\mathcal{H}}$ in Table \ref{tab:1}.
\begin{table}[h!]
\caption{Approximate values for intervals of ${\mathcal{H}}$}\label{tab:1}
\begin{center}
\begin{tabular}{|c|c|c|}
\hline
  m & $[(\overline{10^m\ep_1})_{q_2}, (01^{m+1}\ep_1)_{q_2}]$ & $[(\overline{01^{m+1}\ep_1})_{q_2}, (10^m\ep_1)_{q_2}]$ \\\hline
  1& [{0.602117, 0.670382}]& [{0.736792, 0.805057}] \\
  2 & [{0.693711, 0.733617}] &[{0.673557, 0.713464}] \\
  3& [{0.747254, 0.770582}]& [{0.636592, 0.65992}]\\
  4 & [{0.778554, 0.792191}]& [{0.614983, 0.62862}] \\
  \hline
\end{tabular}
\end{center}
\end{table}

Now we turn to the proof of $q_2\notin\B_{\aleph_0}$. By Proposition \ref{p22} it suffices to prove that $\A_2\cap J_{q_2}$ contains no $q_2$-null infinite points.  Then
by Lemmas \ref{l41} and \ref{l43} we only needs to show that $E_{m} \cap \N =\emptyset$ for $m=1,2,3, 4$, where
$E_m$ is defined in Lemma \ref{l41}.

Our approach  to prove $E_m\cap\N=\emptyset$ is as follows.  If $x\in E_m\cap \N$, then $T_{d_1\cdots d_n}(x)\in J_{q_2}$ implies that $T_{d_1\cdots d_n}(x)\in \A_2$. So, to exclude a point  $x\in E_m$ from $\N$ it suffices to prove that there exists a word $d_1\cdots d_n$ such that $T_{d_1\cdots d_n}(x)\in J_{q_2}\setminus \A_2$.

\begin{lemma}\label{l44}
$E_m\cap \N =\emptyset$ for $m=1$ and $3$.
\end{lemma}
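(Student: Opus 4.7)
The plan is, for each $x \in E_m$ with $m\in\{1,3\}$, to exhibit a word $d_1\cdots d_n \in \{0,1\}^*$ such that $T_{d_1\cdots d_n}(x) \in J_{q_2}\setminus\A_2$. Since $J_{q_2}\subseteq S_{q_2}=\A_1\cup\A_2\cup\A_3$ and an iterate lying on the infinite-expansion branch of $x$ cannot lie in $\A_1$, such a point will in fact land in $\A_3\cap J_{q_2}$, producing a branching point of $x$ outside $\A_2$ and contradicting $x\in\N$. By Lemma \ref{l43}, verifying $E_m\cap\N=\emptyset$ automatically gives $(1/(q_2-1)-E_m)\cap\N=\emptyset$, so Lemma \ref{l41} then yields $\A_2\cap J_{q_2}\cap\N=\emptyset$ for $m\in\{1,3\}$.

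The set $E_m$ consists of two shapes (each parameterized by $k\ge 1$): Type I $x=(01^{m+1}\ep_k)_{q_2}$ and Type II $x=(10^m\ep_k)_{q_2}$, with $(10\ep_1)_{q_2}$ excluded when $m=1$. For Type I, Theorem \ref{t36} identifies $T_0(x)=(1^{m+1}\ep_k)_{q_2}\in\M{2}$, so the finite branch at $x$ is $T_0$ and the infinite branch is $T_1(x)=q_2x-1$; symmetrically, for Type II, $T_1(x)=(0^m\ep_k)_{q_2}\in\M{2}$ and the infinite branch is $T_0(x)=q_2x$. We then follow the infinite branch digit by digit -- the next shift being uniquely determined when the iterate lies outside $S_{q_2}$, and chosen to preserve $\M{\f}$-membership when inside $S_{q_2}$ (guided by Theorem \ref{t36} combined with Lemma \ref{l41}) -- until the iterate returns to $J_{q_2}$.

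Direct computation using (\ref{e41}) shows that for $x=(01^2\ep_k)_{q_2}$ (the $m=1$, Type I case), a shift word of length $2k+2$ collapses $x$, for every $k\ge 1$, to the single algebraic point
$$\frac{2q_2-1}{q_2(q_2^2-1)}\approx 0.7347,$$
the simplification being achieved via the minimal polynomial $q_2^4=2q_2^2+q_2+1$. Comparing this value against the intervals in Table \ref{tab:1} (which, by Lemma \ref{l41}, contain $\A_2\cap J_{q_2}$), it lies in $J_{q_2}$ strictly between the $m'=2$ left interval $[0.694,0.734]$ and the $m'=1$ right interval $[0.737,0.805]$, hence outside $\A_2\cap J_{q_2}$; it also differs from the two $\A_1$-points of Lemma \ref{l32}, so it lies in $\A_3\cap J_{q_2}$. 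Analogous closed-form computations handle Type II with $m=1$ and both shapes with $m=3$, each yielding a specific algebraic point in $\A_3\cap J_{q_2}$.

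The main obstacle is to specify the correct shift word uniformly in $k$ for each of the four combinations (Type I/II $\times$ $m\in\{1,3\}$) and to verify that every intermediate iterate lies in the domain of the next shift. The shift word's length grows linearly in $k$, reflecting the $(01)^k$-prefix of $\ep_k$ that must be consumed before the iterate first enters $J_{q_2}$; the monotonicity of Lemma \ref{l42} and the numerical bounds of Table \ref{tab:1} are used throughout to locate the iterate relative to the patterns comprising $\A_1\cup\A_2$.
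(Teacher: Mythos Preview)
Your approach is essentially the same as the paper's: for each $x\in E_m$ you exhibit an explicit shift word of length linear in $k$ sending $x$ to an algebraic point of $J_{q_2}\setminus\A_2$, and you verify the location using Table~\ref{tab:1}. The paper carries this out with the concrete words $(10)^{k-1}0^31$, $(10)^{k+1}01$, $(10)^k1^20$, $(10)^{k-2}1^40$ and observes that \emph{all four} families collapse to the \emph{same} point $\dfrac{2q_2-1}{q_2^3-q_2}$, which coincides with your $\dfrac{2q_2-1}{q_2(q_2^2-1)}$; your version states only the $m=1$, Type~I case explicitly and asserts the remaining three by analogy, so the write-up is a sketch rather than a complete proof. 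Your explicit remark that one must follow the \emph{infinite} branch (so the landing point cannot lie in $\A_1$) is a point the paper leaves implicit; otherwise the two arguments are identical.
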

 \begin{proof}
Recall from Lemma \ref{l41} that
\begin{equation*}
E_m :=  \bigcup_{k=1}^\f\set{(01^{m+1}\ep_k)_{q_2 }, (10^m\ep_k)_{q_2 }}\setminus\set{  (10\ep_1)_{q_2}}
\end{equation*}
for $m=1,3$.
By (\ref{e41}) and using $q_2^4=2q_2^2+q_2+1$ it follows that for any $k\ge 1$ we have
\begin{equation*}
\begin{split}
 &T_{(10)^{k-1}0^{3}1}((01^{2}\ep_k)_{q_2})\\
 &=q_2^{2k+2}\left(\frac{q_2^{2k+3}+q_2^{2k+2}-q_2^{2k+1}+q_2-1}{q_2^{2k+3}(q_2^2-1)}-\Big(\frac{1}{q_2}+\frac{1-q^{-2k+2}}{q^4(q^2-1)}\Big)\right)\\
 &=\frac{-q_2^{2k-1}(q_2-1)(q_2^4-2q_2^2-q_2-1)+2q_2-1}{q_2(q_2^2-1)}\\
 &=\frac{2q_2-1}{q_2^3-q_2} (\approx 0.734788)~\in J_{q_2}\setminus{\mathcal{H}},
 \end{split}
\end{equation*}
 where the last inclusion follows by Table \ref{tab:1}. So, by Lemma \ref{l42} and Proposition \ref{p22} it follows that $ (01^{2}\ep_k)_{q_2} \notin \N $ for any $k\ge 1$.

 Similarly, by (\ref{e41}) and using $q_2^4=2q_2^2+q_2+1$ one can show that
 $$
T_{(10)^{k+1}01}((01^4\ep_k)_{q_2})=T_{(10)^k1^20}((10^3\ep_k)_{q_2})=\frac{2q_2-1}{q_2^3-q_2}\in J_{q_2}\setminus{\mathcal{H}}
$$
for any $k\ge 1$.
This implies $(01^4\ep_k)_{q_2}, (10^3\ep_k)_{q_2}\notin\N$. Furthermore,
 $$
 T_{(10)^{k-2}1^40}((10\ep_k)_{q_2})=\frac{2q_2-1}{q_2^3-q_2}\in J_{q_2}\setminus{\mathcal{H}},
 $$
 for any $k\ge 2$, implying  $(10\ep_k)_{q_2}\notin\N$.
\qed\end{proof}

\begin{lemma}\label{l45}
  $E_m\cap\N_{q_2}=\emptyset$ for $m=2$ and $4$.
\end{lemma}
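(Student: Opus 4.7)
The plan is to extend the strategy of Lemma \ref{l44} to the points in $E_m$ when $m\in\{2,4\}$. For each such $x \in E_m$ I will exhibit an explicit word $d_1\cdots d_n\in\{0,1\}^*$ such that $T_{d_1\cdots d_n}(x) \in J_{q_2}\setminus \mathcal{H}$. Since $\A_2\cap J_{q_2}\subseteq \mathcal{H}$ by Lemma \ref{l42}(2), this immediately gives $T_{d_1\cdots d_n}(x) \in J_{q_2}\setminus\A_2$, which by the discussion preceding Lemma \ref{l44} suffices to show $x\notin\N$.

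Compared with the definition of $E_m$ for $m\in\{1,3\}$, the only new feature of $E_m$ for $m\in\{2,4\}$ is the extra element $(01^m(10)^\f)_{q_2}$. I would dispatch this point first. For $m=2$, the short word $001$ already works: expanding $(01^2(10)^\f)_{q_2}=1/q_2^2+1/q_2^3+1/(q_2^2(q_2^2-1))$ and using the defining relation $q_2^4=2q_2^2+q_2+1$ one gets
$$T_{001}\bigl((01^2(10)^\f)_{q_2}\bigr)=q_2^3(01^2(10)^\f)_{q_2}-q_2^2=\frac{q_2^3-q_2-2}{q_2^2-1}\approx 0.672,$$
which by Table \ref{tab:1} lies in the gap $(0.670382,0.673557)\subseteq J_{q_2}\setminus\mathcal{H}$. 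For $m=4$ an analogous, slightly longer word produces, after the same collapse via $q_2^4=2q_2^2+q_2+1$, another rational expression lying in one of the two gaps of $J_{q_2}\setminus\mathcal{H}$ visible from Table \ref{tab:1}.

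The two remaining families $(01^{m+1}\ep_k)_{q_2}$ and $(10^m\ep_k)_{q_2}$ with $k\ge 1$ I would handle in direct analogy with Lemma \ref{l44}. There, for $m\in\{1,3\}$, the authors used words of the form $(10)^{j}0^{j'}1$ or $(10)^{j}1^{j'}0$ whose lengths depend on $k$, so that after applying $q_2^4=2q_2^2+q_2+1$ the image collapsed to the $k$-independent value $(2q_2-1)/(q_2^3-q_2)\approx 0.735$, lying in the gap $(0.733617,0.736792)\subseteq J_{q_2}\setminus\mathcal{H}$. For $m\in\{2,4\}$ I would use the same shape of word with the parameters $j,j'$ readjusted, and verify that the identity $q_2^4=2q_2^2+q_2+1$ again reduces the image to a $k$-independent value in one of the two gaps of $J_{q_2}\setminus\mathcal{H}$.

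The main hurdle is combinatorial bookkeeping: across the six sub-cases (three point types times $m\in\{2,4\}$) I must pinpoint a word whose intermediate iterates all stay in the domains of their respective $T_{d_i}$'s, and whose final image---after the algebraic simplification via $q_2^4=2q_2^2+q_2+1$---is independent of $k$ and lies strictly inside one of the two narrow gaps of $J_{q_2}\setminus\mathcal{H}$. Once the correct words are identified, the verification reduces to a purely algebraic simplification followed by a numerical comparison against the interval endpoints of Table \ref{tab:1}.
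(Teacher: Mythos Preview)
Your treatment of the extra points $(01^m(10)^\infty)_{q_2}$ matches the paper: for $m=2$ the word $0^21$ sends the point to $(q_2^3-q_2-2)/(q_2^2-1)\approx 0.672$, which lies in the gap of $\mathcal H$ between the $m=1$ and $m=2$ intervals; the paper handles $m=4$ analogously with $10^21$, landing at $(2q_2-1)/(q_2^3-q_2)\approx 0.735$.

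For the families $(01^{m+1}\ep_k)_{q_2}$ and $(10^m\ep_k)_{q_2}$, however, the paper does \emph{not} proceed as you propose. You assert that $k$-dependent words of the Lemma~\ref{l44} type will again collapse the image to a single $k$-independent value in $J_{q_2}\setminus\mathcal H$; this is the crux, and you have not verified it. The paper's argument is structurally different: it applies a \emph{fixed} short word to each family---$T_{0^21}$ to $(01^3\ep_k)_{q_2}$, $T_{10^21}$ to $(01^5\ep_k)_{q_2}$, $T_{1^20}$ to $(10^2\ep_k)_{q_2}$, $T_{01^20}$ to $(10^4\ep_k)_{q_2}$---obtaining a value that \emph{depends} on $k$ and converges monotonically, as $k\to\infty$, to one of the two gap values above. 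For all sufficiently large $k$ (for instance $k\ge 5$ in the first family) the image already lies in $J_{q_2}\setminus\mathcal H$ and one is done. For the finitely many remaining small $k$, the image lands \emph{inside} $\mathcal H$, so Lemma~\ref{l42}(2) no longer suffices; the paper then falls back on the explicit description of $\A_2\cap J_{q_2}$ from Lemma~\ref{l41}, using the monotonicity of Lemma~\ref{l42}(1) to place the image strictly between two consecutive elements of $\A_2$, hence in $J_{q_2}\setminus\A_2$.

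So the gap in your proposal is the unverified claim that a $k$-independent collapse is available for $m\in\{2,4\}$ as it was for $m\in\{1,3\}$. The paper sidesteps this question entirely by accepting a $k$-dependent image and paying the price of a finite case-by-case check against $\A_2$ (not merely $\mathcal H$) for small $k$.
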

\begin{proof}
Recall from Lemma \ref{l41} that
\begin{equation*}
E_m :=\set{(01^m(10)^\f)_{q_2 }} \cup  \bigcup_{k=1}^\f\set{(01^{m+1}\ep_k)_{q_2 }, (10^m\ep_k)_{q_2 }}.
\end{equation*}
for $m=2,4$.

 By (\ref{e41}) and using  $q_2^4=2q_2^2+q_2+1$  it follows that
\begin{equation}\label{e42}
T_{0^21}((01^3\ep_k)_{q_2})=\frac{q_2^3-q_2-2+q_2^{-2k}-q_2^{-2k-1}}{q_2^2-1}\ra\frac{q_2^3-q_2-2}{q_2^2-1}
\end{equation}
as $k\ra\f$. Then, by Table \ref{tab:1} it follows that
$$T_{0^21}((01^2(10)^\f)_{q_2})=\frac{q_2^3-q_2-2}{q_2^2-1}(\approx 0.672386)~\in J_{q_2}\setminus{\mathcal{H}}.$$
This implies $(01^2(10)^\f)_{q_2}\notin\N$.

Note by (\ref{e42}) that $T_{0^21}((01^3\ep_k)_{q_2})$ decreases   as $k\ra\f$. Then by Table \ref{tab:1} and numerical calculation one can show that $$T_{0^21}((01^3\ep_k)_{q_2})\in J_{q_2}\setminus \mathcal{H}$$ for all $k\ge 5$.
So, by Proposition \ref{p22} and Lemma \ref{l42} it follows that $(01^3\ep_k)_{q_2}\notin\N$ for all $k\ge 5$.

In the following we will prove $(01^3\ep_k)_{q_2}\notin\N$ for $k\le 4$.  First we consider the case $k=4$.
By (\ref{e42}) and Table \ref{tab:1} it follows that
$$T_{0^21}((01^3\ep_4)_{q_2})\approx 0.675327\in [(\overline{10^3\ep_1})_{q_2}, (10^2\ep_1)_{q_2}].$$
Then by using the monotonicity in  Lemma \ref{l42} one can show that
$$
(\overline{01^3\ep_1})_{q_2}<T_{0^21}((01^3\ep_4)_{q_2})<(\overline{01^3\ep_2})_{q_2},
$$
which, together with Lemma \ref{l41}, implies that $T_{0^21}((01^3\ep_4)_{q_2})\in J_{q_2}\setminus\A_2$. Therefore, $(01^3\ep_4)_{q_2}\notin\N$.

Similarly, one can show by using Lemmas  \ref{l41} and \ref{l42} that  all of these numbers
$T_{0^21}((01^3\ep_1)_{q_2})\approx 0.746083$, $T_{0^21}((01^3\ep_2)_{q_2})\approx 0.69757$ and $T_{0^21}((01^3\ep_3)_{q_2})\approx 0.680992$
belong to $J_{q_2}\setminus\A_2$. Hence,  $(01^3\ep_k)_{q_2}\notin\N$ for all $k\ge 1$.

Symmetrically, by (\ref{e41}) and using $q_2^4=2q_2^2+q_2+1$ we obtain
$$
T_{10^21}((01^5\ep_k)_{q_2})=\frac{2q_2-1+q_2^{-2k}-q_2^{-2k-1}}{q^3_2-q_2}\ra\frac{2q_2-1}{q_2^3-q_2}\in J_{q_2}\setminus{\mathcal{H}}
$$
as $k\ra\f.$ This yields that
$(01^4(10)^\f)_{q_2} \notin\N$. In a similar way as above we can prove that $(01^5\ep_k)_{q_2}\notin\N$ for all $k\ge 1$.

Furthermore, the proof of
$$(10^2\ep_k)_{q_2}, (10^4\ep_k)_{q_2}\notin\N\quad\textrm{for all}~k\ge 1,$$ can be done in a similar way by observing that
$$
T_{1^2 0}((10^2\ep_k)_{q_2})=\frac{2q_2-1+q_2^{2-2k}-q_2^{1-2k}}{q_2^3-q_2}\ra\frac{2q_2-1}{q_2^3-q_2}\in J_{q_2}\setminus{\mathcal{H}},
$$
and
$$
T_{01^20}((10^4\ep_k)_{q_2})=\frac{q_2^3-q_2-2+q_2^{1-2k}-q_2^{-2k}}{q_2^2-1}\ra\frac{q_2^3-q_2-2}{q_2^2-1}\in J_{q_2}\setminus{\mathcal{H}}
$$
as $k\ra\f$.
\qed\end{proof}

\begin{proof}
 By Lemmas \ref{l44}--\ref{l45} it follows that
$$
\N\cap\bigcup_{m=1}^4 E_m=\emptyset.
$$
Then by Lemmas \ref{l41} and \ref{l43} we have
$$
 \N\cap\A_2 \cap J_{q_2} =\emptyset.
$$
Therefore, we conclude by Proposition \ref{p22} that $q_2\notin\B_{\aleph_0}$.
\qed\end{proof}

\section*{Acknowledgments}
The authors thank the anonymous referee for many useful remarks. In particular, the authors thank Simon Baker for some suggestions and   references.
The first author was supported by the NSFC  no 11201312, no 61373087, no 61272252; the Foundation for
Distinguished Young Teachers in Guangdong, China no Yq2013144.
The second author was supported by the NSFC no  11401516, no 11271137 and Jiangsu Province Natural
Science Foundation for the Youth no BK20130433.


\begin{thebibliography}{10}
\expandafter\ifx\csname url\endcsname\relax
  \def\url#1{\texttt{#1}}\fi
\expandafter\ifx\csname urlprefix\endcsname\relax\def\urlprefix{URL }\fi
\expandafter\ifx\csname href\endcsname\relax
  \def\href#1#2{#2} \def\path#1{#1}\fi




 \bibitem{Baker_2015}
S.~Baker, \href{http://dx.doi.org/10.1016/j.jnt.2014.08.003}{On small bases
  which admit countably many expansions}, J. Number Theory 147 (2015) 515--532.


\bibitem{Baker_Sidorov_2014}
S.~Baker, N.~Sidorov, Expansions in non-integer bases: lower order revisited,
  Integers 14 (2014) Paper No. A57, 15.


\bibitem{Dajani_DeVries_2007}
K.~Dajani, M.~de~Vries, \href{http://dx.doi.org/10.4171/JEMS/76}{Invariant
  densities for random {$\beta$}-expansions}, J. Eur. Math. Soc.   9~(1)
  (2007) 157--176.


\bibitem{Dajani_Kraaikamp_2003}
K.~Dajani, C.~Kraaikamp,
  \href{http://dx.doi.org/10.1017/S0143385702001141}{Random
  {$\beta$}-expansions}, Ergodic Theory Dynam. Systems 23~(2) (2003) 461--479.


\bibitem{DeVries_Komornik_2008}
M.~de~Vries, V.~Komornik,
  \href{http://dx.doi.org/10.1016/j.aim.2008.12.008}{Unique expansions of real
  numbers}, Adv. Math. 221~(2) (2009) 390--427.


\bibitem{Erdos_Joo_Komornik_1990}
P.~Erd\H{o}s, I.~Jo\'{o}, V.~Komornik, Characterization of the unique
  expansions $1=\sum_{i=1}^\infty q^{-n_i}$ and related problems, Bull. Soc.
  Math. France 118 (1990) 377--390.


 \bibitem{Erdos_Horvath_Joo_1991}
P.~Erd{\H{o}}s, M.~Horv{\'a}th, I.~Jo{\'o},
  \href{http://dx.doi.org/10.1007/BF01903963}{On the uniqueness of the
  expansions {$1=\sum q^{-n_i}$}}, Acta Math. Hungar. 58~(3-4) (1991) 333--342.



 \bibitem{Erdos_Joo_1992}
P.~Erd{\H{o}}s, I.~Jo{\'o}, On the number of expansions {$1=\sum q^{-n_i}$},
  Ann. Univ. Sci. Budapest. E\"otv\"os Sect. Math. 35 (1992) 129--132.



\bibitem{Glendinning_Sidorov_2001}
P.~Glendinning, N.~Sidorov, Unique representations of real numbers in
  non-integer bases, Math. Res. Lett. 8 (2001) 535--543.


\bibitem{Komornik_2011}
V.~Komornik, Expansions in noninteger bases, Integers 11B (2011) Paper No. A9,
  30.


\bibitem{Komornik_Kong_Li_2015_1}
V.~Komornik, D.~Kong, W.~Li, Hausdorff dimension of univoque sets and devil's
  staircase, arXiv:1503.00475.


\bibitem{Komornik_Loreti_2007}
V.~Komornik, P.~Loreti, \href{http://dx.doi.org/10.1016/j.jnt.2006.04.006}{On
  the topological structure of univoque sets}, J. Number Theory 122  (2007)
  157--183.


\bibitem{Kong_Li_2015}
D.~Kong, W.~Li, Hausdorff dimension of unique beta expansions, Nonlinearity
  28~(1) (2015) 187--209.


\bibitem{Parry_1960}
W.~Parry, On the $\beta$-expansions of real numbers, Acta Math. Acad. Sci.
  Hungar. 11 (1960) 401--416.

\bibitem{Renyi_1957}
A.~R\'{e}nyi, Representations for real numbers and their ergodic properties,
  Acta Math. Acad. Sci. Hungar. 8 (1957) 477--493.






\bibitem{Sidorov_2003}
N.~Sidorov, \href{http://dx.doi.org/10.2307/3647804}{Almost every number has a
  continuum of {$\beta$}-expansions}, Amer. Math. Monthly 110~(9) (2003)
  838--842.


\bibitem{Sidorov_2003_1}
N.~Sidorov, \href{http://dx.doi.org/10.1017/CBO9780511546716.010}{Arithmetic
  dynamics}, in: Topics in dynamics and ergodic theory, Vol. 310 of London
  Math. Soc. Lecture Note Ser., Cambridge Univ. Press, Cambridge, 2003, pp.
  145--189.





\bibitem{Sidorov_2009}
N.~Sidorov, \href{http://dx.doi.org/10.1016/j.jnt.2008.11.003}{Expansions in
  non-integer bases: lower, middle and top orders}, J. Number Theory 129
  (2009) 741--754.




\bibitem{Solomyak_1994}
B.~Solomyak, \href{http://dx.doi.org/10.1112/plms/s3-68.3.477}{Conjugates of
  beta-numbers and the zero-free domain for a class of analytic functions},
  Proc. London Math. Soc. (3) 68~(3) (1994) 477--498.


\end{thebibliography}
\end{document}